\theoremstyle{plain}
\newtheorem{thm}{Theorem}
\newtheorem{lem}[thm]{Lemma}
\newtheorem{nota}[thm]{Notation}
\newtheorem{rem}[thm]{Remark}
\newtheorem{defin}[thm]{Definition}
\newcommand{\R}{\mathbb{R}}
\newcommand{\N}{\mathbb{N}}
\def\multiset#1#2{\ensuremath{\left(\kern-.2em\left(\genfrac{}{}{0pt}{}{#1}{#2}\right)\kern-.2em\right)}}
\begin{document}

\title{Weighted graphs with distances in given ranges }
\author{ Elena Rubei}
\date{}
\maketitle

\def\thefootnote{}
\footnotetext{ \hspace*{-0.52cm}
{\bf 2010 Mathematical Subject Classification: 05C05, 05C12, 05C22} 

{\bf Key words: weighted graphs,  distances, range} }

\begin{abstract}
Let  ${\cal G}=(G,w)$ be a weighted simple finite connected  graph,
that is, let $G$ be a simple finite  connected
 graph endowed with a function $w$
from the set of the edges of $G$ to the set of real numbers.
For any subgraph $G'$ of  $G$, we define  $w(G')$ to be the sum of the weights of the edges of $G'$. 
For any  $i,j $ vertices of $G$, we define $D_{\{i,j\}} ({\cal G})$ to be
the minimum of the weights of the simple paths of $G$ joining $i$ and $j$. The  $D_{\{i,j\}} ({\cal G})$ are called $2$-weights of  ${\cal G}$. 
Weighted graphs and their reconstruction from $2$-weights have applications in several disciplines, such as 
biology and psychology.

Let $\{m_I\}_{I  \in {\{1,...,n\} \choose 2}}$ and
$\{M_I\}_{I  \in {\{1,...,n\} \choose 2}}$ be
two families of positive real numbers parametrized by the $2$-subsets of 
$ \{1,..., n\}$ with $m_I \leq M_I$ for any $I$;  we study when 
there exist a positive-weighted graph ${\cal G}$
and an  $n$-subset $\{1,..., n\}$ of  the set of its vertices such that 
$D_I ({\cal G}) \in [m_I, M_I] $ for any $I  \in {\{1,...,n\} \choose 2}$.
Then we study the analogous problem for trees, both in the case of positive weights and in the case of general weights.
\end{abstract}

\section{Introduction}

For any graph $G$, let $E(G)$, $V(G)$ and $L(G)$ 
 be respectively the set of the edges,   
the set of the vertices and  the set of the leaves of $G$.
A {\bf weighted graph} ${\cal G}=(G,w)$ is a graph $G$ 
endowed with a function $w: E(G) \rightarrow \R$. 
For any edge $e$, the real number $w(e)$ is called the weight of the edge. If 
all the weights are nonnegative (respectively positive), 
we say that the graph is {\bf 
nonnegative-weighted} (respectively {\bf positive-weighted}).
Throughout the paper we will consider only simple finite  connected graphs.

For any subgraph $G'$ of  $G$, we define  $w(G')$ to be the sum of the weights of the edges of $G'$. 

\begin{defin}
Let ${\cal G}=(G,w) $ be a weighted graph. 
For any distinct $ i,j \in V(G)$, we define $$ D_{\{i,j\}}({\cal G}) = min 
\{w(p) | \; p \text{ a simple path of } G  \text{ joining }  i  \text{ and } j\}.$$ More simply, we denote $D_{\{i,j\}}({\cal G})$ by
$D_{i,j}({\cal G})$ for any order of $i,j$.  
We call  the  $ D_{i,j}({\cal G})$ the {\bf $2$-weights} (or distances) of ${\cal G}$.
\end{defin}

Observe that in the case ${\cal G}$ is a tree,  $ D_{i,j}({\cal G})$ is  the weight of the unique path joining $i$ and $j$.

 If $S $ is a subset of $V(G)$, the $2$-weights give
a vector in $\mathbb{R}^{  S \choose 2}$. This vector is called 
$2${\bf -dissimilarity vector} of $({\cal G}, S)$.
Equivalently,  we can speak 
of the {\bf family of the $2$-weights}  of $({\cal G}, S)$.

We can wonder when a family of real numbers is the family of the $2$-weights of some weighted  graph and of some subset of the set of its vertices.
If $S$ is a finite set of cardinality greater than $2$,
we say that a family of  real numbers
 $\{D_{I}\}_{I \in {S \choose 2}}$   is {\bf graphlike} (respectively p-graphlike, nn-graphlike) if 
there exist a weighted graph (respectively a positive-weighted graph, a nonnegative-weighted  graph) ${\cal G}=(G,w)$ and a subset $S$ 
of the set of its vertices such that $ D_{I}({\cal G}) = D_{I}$  for any 
 $2$-subset $I$ of $ S$.
If the graph is a weighted (respectively positive-weighted, 
 nonnegative-weighted)  tree
 ${\cal T}=(T,w)$
we say that the family 
 is {\bf treelike} (respectively  p-treelike, nn-treelike). 
If, in addition, $S \subset L(T)$, we say that the family
 is  {\bf l-treelike} (respectively, p-l-treelike, nn-l-treelike).

Weighted graphs have applications in several disciplines, such as 
biology and psychology. Phylogenetic trees are weighted graphs
whose vertices represent  species and the weight of an edge is given 
by how much the DNA sequences of the species represented by the vertices of the edge differ. Dissimilarity families arise naturally also in 
 psychology, see for instance the introduction in \cite{C-S}.
There is a wide literature concerning graphlike dissimilarity families
and treelike dissimilarity families,
in particular concerning methods to reconstruct weighted trees from their dissimilarity families; these methods are used by biologists to reconstruct phylogenetic trees. 
See for example \cite{N-S}, \cite{S-K} and  \cite{Dresslibro}, \cite{S-S} for overviews.

The first contribution to the 
characterization of  graphlike families of numbers dates back  to 1965 and it is due  to Hakimi and Yau, see  \cite{H-Y}:

\begin{thm} \label{Hakimi-Yau} {\bf (Hakimi-Yau)}
A family of positive real numbers
$\{D_{I}\}_{I \in {\{1,...,n\} \choose 2}}$
is p-graphlike if and only if the $D_I$ satisfy the 
triangle inequalities, i.e. if and only if $D_{i,j} \leq D_{i,k} + D_{k,j}$ for any distinct $i,j,k \in [n]$.
\end{thm}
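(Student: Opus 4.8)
The plan is to treat the two implications separately, the ``only if'' direction being a direct verification of the triangle inequalities from the definition of the $2$-weights, and the ``if'' direction an explicit construction on the complete graph. For necessity, start from a positive-weighted graph $\mathcal{G}=(G,w)$ and a subset $S=\{1,\dots,n\}\subseteq V(G)$ realizing the family, so $D_I(\mathcal{G})=D_I$ for all $I$. Fix distinct $i,j,k$, choose minimum-weight simple paths $p$ from $i$ to $k$ and $q$ from $k$ to $j$ (so $w(p)=D_{i,k}$, $w(q)=D_{k,j}$), and let $H$ be the subgraph with edge set $E(p)\cup E(q)$. Then $w(H)\le w(p)+w(q)=D_{i,k}+D_{k,j}$; since $H$ is connected (the two paths share the vertex $k$) and contains $i$ and $j$, it contains a simple path $p'$ from $i$ to $j$, and because all weights are positive $w(p')\le w(H)$. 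Hence $D_{i,j}=D_{i,j}(\mathcal{G})\le w(p')\le D_{i,k}+D_{k,j}$, which is the required triangle inequality.

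For sufficiency, assume the $D_I$ satisfy all triangle inequalities, and take $G=K_n$ on the vertex set $\{1,\dots,n\}$ with $w(\{i,j\})=D_{i,j}$; this is a positive-weighted connected graph. I claim $D_{i,j}(\mathcal{G})=D_{i,j}$ for every pair, which, with $S=V(G)$, proves p-graphlikeness. The inequality $D_{i,j}(\mathcal{G})\le D_{i,j}$ is immediate, since the single edge $\{i,j\}$ is a simple path of weight $D_{i,j}$. For the reverse, show by induction on length that any simple path $i=v_0,v_1,\dots,v_\ell=j$ in $K_n$ has weight $\sum_{t=1}^{\ell}D_{v_{t-1},v_t}\ge D_{v_0,v_\ell}=D_{i,j}$: the inductive step is one application of the triangle inequality to the distinct vertices $v_0,v_{\ell-1},v_\ell$ (distinct because the path is simple), using the inductive hypothesis on the sub-path $v_0,\dots,v_{\ell-1}$. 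Taking the minimum over all simple paths joining $i$ and $j$ gives $D_{i,j}(\mathcal{G})\ge D_{i,j}$, hence equality.

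I do not expect a genuine obstacle, as the statement is elementary; the only point deserving attention is the passage from the subgraph $H$ to an actual simple path $p'$ in the necessity argument. This is precisely where positivity of the weights is used: pruning $H$ down to a simple path removes edges, and with positive weights this cannot increase the total weight. (For weights of arbitrary sign the minimum defining $D_{i,j}$ can fail the triangle inequality, which is why the theorem is stated for p-graphlike families.)
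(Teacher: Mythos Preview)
Your proof is correct and follows the standard elementary argument. The paper itself does not give a proof of this statement: Theorem~\ref{Hakimi-Yau} is quoted in the introduction as a classical result of Hakimi and Yau (1965) and is used later without being reproved, so there is nothing to compare against.
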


In the same years, also a criterion for a metric on
a finite set to be nn-l-treelike
was established, see \cite{B}, \cite{SimP}, \cite{Za}:

\begin{thm} \label{Bune} {\bf (Buneman-SimoesPereira-Zaretskii)}
Let $\{D_{I}\}_{I \in {\{1,...,n\} \choose 2}}$ be a set of positive real numbers 
satisfying the triangle inequalities.
It is p-treelike (or nn-l-treelike) 
 if and only if, for all distinct $i,j,k,h  \in \{1,...,n\}$,
the maximum of $$\{D_{i,j} + D_{k,h},D_{i,k} + D_{j,h},D_{i,h} + D_{k,j}
 \}$$ is attained at least twice. 
\end{thm}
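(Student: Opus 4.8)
The plan is to treat the two directions separately. Necessity is a short computation; for sufficiency I would build the realizing tree by induction on $n$, attaching one marked vertex at a time, the condition in the statement (the ``four-point condition'') being exactly what makes each attachment consistent. For necessity, suppose $\{D_I\}$ is p-treelike, realized by a positive-weighted tree ${\cal T}=(T,w)$ with $1,\dots,n$ among its vertices (the nn-l-treelike case is identical). Fix distinct $i,j,k,h$ and replace $T$ by the minimal subtree spanning $\{i,j,k,h\}$; this changes none of the four relevant $2$-weights, since the connecting paths already lie in that subtree. Such a subtree is, after relabeling, a \emph{quartet}: two (possibly equal) vertices $u,v$ joined by a path of weight $m\ge 0$, with two of the four vertices attached at $u$ and the other two at $v$. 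Expressing the three sums $D_{i,j}+D_{k,h}$, $D_{i,k}+D_{j,h}$, $D_{i,h}+D_{j,k}$ through the at most five edge-weights, one finds that two of them equal the third plus $2m$; hence the maximum is attained at least twice.

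For sufficiency, assume the four-point condition and induct on $n\ge 3$. For $n=3$ the triangle inequalities make $\tfrac12(D_{i,j}+D_{i,k}-D_{j,k})$ nonnegative for each labeling of $\{i,j,k\}$, and $D$ is realized by the tripod with leaves $1,2,3$ carrying these weights. For the step, by the inductive hypothesis (the condition clearly passes to subsets) there is a nonnegative-weighted tree ${\cal T}'=(T',w')$ with leaf set $\{1,\dots,n-1\}$ realizing $D$ on the $2$-subsets of $\{1,\dots,n-1\}$. For a $2$-subset $\{a,b\}$ of $\{1,\dots,n-1\}$ put $g_{\{a,b\}}=\tfrac12(D_{a,n}+D_{b,n}-D_{a,b})\ge 0$; let $\ell=\min_{\{a,b\}}g_{\{a,b\}}$, attained at $\{a_0,b_0\}$, and let $x$ be the point of the $a_0$--$b_0$ path in $T'$ at distance $\tfrac12(D_{a_0,b_0}+D_{a_0,n}-D_{b_0,n})=D_{a_0,n}-\ell$ from $a_0$ --- a legitimate point of $T'$ by the triangle inequalities, after subdividing an edge of $T'$ if it falls in the interior of one. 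Let ${\cal T}$ be $T'$ with a new leaf $n$ joined to $x$ by an edge of weight $\ell\ge 0$. Then ${\cal T}$ still realizes $D$ on the $2$-subsets of $\{1,\dots,n-1\}$, and $D_{a_0,n}({\cal T})=D_{a_0,n}$ and $D_{b_0,n}({\cal T})=D_{b_0,n}$ by the choice of $x$ and $\ell$; it remains to prove $d_{T'}(k,x)+\ell=D_{k,n}$ for the other $k$.

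This verification is the heart of the matter, and where I expect the main obstacle to lie. Let $y$ be the point at which the branch of $T'$ containing $k$ meets the $a_0$--$b_0$ path, so the $k$--$x$ path in $T'$ runs $k\to y\to x$ and $d_{T'}(k,x)$ is expressible via the $D$'s on $\{a_0,b_0,k\}$ together with the position of $y$. An elementary manipulation shows that $d_{T'}(k,x)+\ell=D_{k,n}$ is equivalent to one of the two equalities $D_{a_0,b_0}+D_{k,n}=D_{a_0,k}+D_{b_0,n}$, $D_{a_0,b_0}+D_{k,n}=D_{a_0,n}+D_{b_0,k}$, according to which side of $x$ the point $y$ lies on. On the other hand $g_{\{a_0,b_0\}}\le g_{\{a_0,k\}}$ and $g_{\{a_0,b_0\}}\le g_{\{b_0,k\}}$ say precisely that, among the three sums attached to the quadruple $\{a_0,b_0,k,n\}$, the one containing $D_{a_0,b_0}$ is $\ge$ each of the other two, while the side of $x$ containing $y$ records which of those two others is the larger. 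The four-point condition --- the maximum of the three is attained twice --- then forces $D_{a_0,b_0}+D_{k,n}$ to equal that larger one, which is exactly the identity needed. The delicate point is the choice of attachment: attaching $n$ through a fixed reference leaf, or through the pair \emph{maximizing} $g$, genuinely fails (one sees this already on a tripod), and one must use the \emph{minimizing} pair together with the full strength of the condition.

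Finally, contracting every weight-$0$ edge of ${\cal T}$ produces a positive-weighted tree still realizing $D$ --- no two marked vertices merge, since all $D_I>0$ --- which gives the p-treelike conclusion; and handling the degenerate case in which $x$ would land on $a_0$ or $b_0$ by a weight-$0$ subdivision keeps $1,\dots,n$ leaves throughout, which gives the nn-l-treelike conclusion.
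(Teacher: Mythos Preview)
The paper does not prove this theorem: it is quoted in the introduction as a classical result and attributed to Buneman, Sim\~oes-Pereira and Zaretskii via the references \cite{B}, \cite{SimP}, \cite{Za}, with no proof given. So there is no ``paper's own proof'' to compare against.

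Your argument is essentially the classical one and is correct in outline. The necessity direction is fine. For sufficiency, the inductive attachment via the \emph{minimizing} Gromov product $g_{\{a,b\}}=\tfrac12(D_{a,n}+D_{b,n}-D_{a,b})$ is exactly the right move, and your verification of $d_{T'}(k,x)+\ell=D_{k,n}$ is correct: the minimality inequalities $g_{\{a_0,b_0\}}\le g_{\{a_0,k\}}$ and $g_{\{a_0,b_0\}}\le g_{\{b_0,k\}}$ force $D_{a_0,b_0}+D_{k,n}$ to be the maximum of the three sums on $\{a_0,b_0,k,n\}$, and the position of $y$ relative to $x$ tells you which of the remaining two is second-largest; the four-point condition then forces the needed equality. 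One small wording issue: ``the point at which the branch of $T'$ containing $k$ meets the $a_0$--$b_0$ path'' should simply be phrased as the median of $\{a_0,b_0,k\}$ in $T'$, which always lies on the $a_0$--$b_0$ path. The cleanup (subdividing to keep leaves, contracting zero-weight edges for the p-treelike conclusion) is routine and correctly handled.
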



Also   the case of not necessarily nonnegative weights has been studied.
In 1972 Hakimi and Patrinos proved the following theorem 
(see \cite{H-P}):

  \begin{thm} {\bf (Hakimi-Patrinos)}
 A   family of real numbers $\{D_{I}\}_{I \in {\{1,...,n\} \choose 2}}$
  is always the family of the $2$-weights
of some weighted graph and some subset $\{1,...., n\}$ of its vertices 
\end{thm}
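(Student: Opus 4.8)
The plan is to reduce the arbitrary-weight case to the positive-weight case of Hakimi--Yau (Theorem~\ref{Hakimi-Yau}) by a ``shift and correct'' trick. First I would note why the naive attempt fails: if one puts $D_{i,j}$ on the edge $\{i,j\}$ of the complete graph $K_n$, a detour $i - k - j$ may be cheaper, because the $D_{i,j}$ need not satisfy the triangle inequalities; and one cannot instead use a tree, since the $2$-weights of a weighted tree are additive along the unique paths and hence rigid (for instance, for any four vertices $i,j,k,h$ two of the three sums $D_{i,j}+D_{k,h}$, $D_{i,k}+D_{j,h}$, $D_{i,h}+D_{j,k}$ must agree). So cycles are needed, and shortcuts must be controlled.

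The key observation is that shortcuts disappear once the family is a genuine metric, and any family can be turned into one by a uniform shift. Choose $N \in \R$ large enough that $D'_{i,j} := D_{i,j} + N > 0$ and $D'_{i,j} \leq D'_{i,k} + D'_{k,j}$ for all distinct $i,j,k$ (both are finitely many conditions, the second reading $D_{i,j} - D_{i,k} - D_{k,j} \leq N$). By Theorem~\ref{Hakimi-Yau} --- concretely, by taking $G' = K_n$ with $w'(\{i,j\}) = D'_{i,j}$, for which every simple path from $i$ to $j$ has weight at least $D'_{i,j}$ by iterating the triangle inequality, with equality for the edge $\{i,j\}$ --- there is a positive-weighted graph $\mathcal{G}' = (G', w')$ whose vertex set contains $1, \dots, n$ and with $D_{i,j}(\mathcal{G}') = D'_{i,j}$ for all $i,j$.

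Finally I would correct the shift by attaching pendant edges of negative weight. Let $G$ be obtained from $G'$ by adding new vertices $1', \dots, n'$ and edges $\{i, i'\}$ of weight $-N/2$, and let $w$ be the resulting weight function. Since each $i'$ has degree $1$ in $G$, and a degree-one vertex cannot be an interior vertex of a simple path, every simple path of $G$ from $i'$ to $j'$ has the form $i' - i - p - j - j'$ with $p$ a simple path of $G'$ from $i$ to $j$ (in particular it passes through no other leaf $k'$ and stays in $G'$ between $i$ and $j$). Hence $D_{i',j'}(\mathcal{G}) = -\tfrac{N}{2} + D_{i,j}(\mathcal{G}') - \tfrac{N}{2} = D'_{i,j} - N = D_{i,j}$. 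Taking $\{1', \dots, n'\}$ as the distinguished $n$-subset proves the theorem (the cases $n \leq 2$ being trivial, e.g. a single edge of weight $D_{1,2}$ for $n=2$).

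I expect the only real obstacle to be conceptual: realizing that one should leave the ``metric core'' untouched and push all the negativity into private pendant edges, so that the minimum-path structure is governed entirely by the positive part while every relevant distance is lowered by the same controlled amount $N$. Once that is seen, the sole point needing care is the elementary fact that a degree-one vertex cannot lie in the interior of a simple path, which is exactly what forces every simple path between two pendants through the core.
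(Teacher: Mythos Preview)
Your proof is correct. Note, however, that the paper does not give its own proof of this theorem: it is quoted as a background result due to Hakimi and Patrinos \cite{H-P}, so there is no in-paper argument to compare against. Your shift-and-pendant construction is a standard and clean way to establish it --- by translating the family to a genuine metric and then undoing the shift via degree-one edges of weight $-N/2$, you ensure that all minimum-weight simple paths between the distinguished vertices are forced through the positive core, where the (now valid) triangle inequalities guarantee the direct edge is optimal. The only detail worth a second glance is the claim that a simple $i'$--$j'$ path cannot pass through any other pendant $k'$, and you have that right: a degree-one vertex cannot be interior to a simple path.
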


In \cite{B-S}, Bandelt and Steel proved a result, analogous to
Theorem \ref{Bune}, for general weighted trees:

\begin{thm} \label{Bandelt-Steel} {\bf  (Bandelt-Steel)}
For any set of real numbers $\{D_{I}\}_{I \in 
{\{1,...,n\} \choose 2}}$,
 there exists a weighted tree ${\cal T}$ with leaves $1,...,n$
such that $ D_{I} ({\cal T})= D_{I}$  for any  $2$-subset $I$ of
 $\{1,...,n\}$  if and only
 if, for any distinct $a,b,c,d \in  \{1,...,n\}$,  we have that at least two among 
 $$ D_{a,b} + D_{c,d},\;\;D_{a,c} + D_{b,d},\;\; D_{a,d} + D_{b,c}$$
are equal.
\end{thm}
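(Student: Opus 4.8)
\medskip
\noindent\emph{Necessity.} Suppose $\mathcal{T}=(T,w)$ is a weighted tree with leaves $1,\dots,n$ and $D_I(\mathcal{T})=D_I$ for all $I$, and fix distinct $a,b,c,d$. The union of the six paths among $a,b,c,d$ in $T$ is a subtree which, after suppressing vertices of degree two, is either a star (one internal node joined to the four leaves) or an ``H'': two branch nodes $u,v$ joined by a path, with two of the four leaves attached to $u$ and the other two to $v$. Say $a,b$ are attached to $u$ and $c,d$ to $v$, and put $\alpha=d_{\mathcal T}(a,u)$, $\beta=d_{\mathcal T}(b,u)$, $\gamma=d_{\mathcal T}(c,v)$, $\delta=d_{\mathcal T}(d,v)$, $\mu=d_{\mathcal T}(u,v)$ (signed sums of weights). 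Then $D_{a,b}+D_{c,d}=\alpha+\beta+\gamma+\delta$ while $D_{a,c}+D_{b,d}=D_{a,d}+D_{b,c}=\alpha+\beta+\gamma+\delta+2\mu$, so two of the three sums coincide; the star is the case $\mu=0$, where all three coincide. (The partition of $\{a,b,c,d\}$ into the two pairs may vary, but in every case two of the three sums are equal.)

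\medskip
\noindent\emph{Sufficiency --- strategy.} I would argue by induction on $n$. For $n\le 3$ the four-point condition is vacuous and a realization always exists: a single weighted edge for $n=2$, and for $n=3$ a tripod with centre $o$ and pendant weights $w(oi)=\tfrac12(D_{i,j}+D_{i,k}-D_{j,k})$, which make sense for arbitrary reals. Assume $n\ge 4$ and the result for $n-1$. The reduction scheme is: (i) find a \emph{cherry pair}, i.e.\ distinct $p,q\in\{1,\dots,n\}$ and $\varepsilon\in\R$ with $D_{p,x}-D_{q,x}=\varepsilon$ for every $x\notin\{p,q\}$; (ii) merge $p$ and $q$ into a single new element $*$ and check that the resulting family on $n-1$ elements still satisfies the four-point condition; (iii) realize that family by the inductive hypothesis and graft $p,q$ back as a cherry at the image of $*$.

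\medskip
\noindent\emph{Sufficiency --- the merge and the regraft.} For (ii), set $D'_{x,y}=D_{x,y}$ when $x,y\notin\{p,q\}$ and $D'_{*,x}=\tfrac12(D_{p,x}+D_{q,x}-D_{p,q})$. Substituting the cherry identity $D_{p,x}=D_{q,x}+\varepsilon$ should show that the three quartet sums of $D'$ on any four elements of $(\{1,\dots,n\}\setminus\{p,q\})\cup\{*\}$ coincide --- up to one common factor $2$ and one common additive constant --- with the three quartet sums of $D$ on the four-subset of $\{1,\dots,n\}$ obtained by replacing $*$ by $q$; hence the four-point condition is inherited by $D'$. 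For (iii), let $\mathcal{T}'$ be a weighted tree with leaves $(\{1,\dots,n\}\setminus\{p,q\})\cup\{*\}$ realizing $D'$, and promote $*$ to an internal vertex $c$ by attaching two new pendant edges $cp,cq$ of weights $\tfrac12(D_{p,q}+\varepsilon)$ and $\tfrac12(D_{p,q}-\varepsilon)$. The resulting tree $\mathcal{T}$ has leaves $1,\dots,n$, and the cherry identity gives $d_{\mathcal T}(x,p)=D'_{*,x}+\tfrac12(D_{p,q}+\varepsilon)=\tfrac12(D_{p,x}+D_{q,x}+\varepsilon)=D_{p,x}$, likewise $d_{\mathcal T}(x,q)=D_{q,x}$, while $d_{\mathcal T}(p,q)=D_{p,q}$ and distances among the remaining leaves are unchanged; so $\mathcal{T}$ realizes $D$.

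\medskip
\noindent\emph{The main obstacle.} Steps (ii) and (iii) are routine manipulations; the real content is step (i) --- existence of a cherry pair. This is exactly the assertion that the four-point condition forces the quartet split pattern of $D$ (to each four-subset assign the split $ab\,|\,cd$ read off from whichever pair-sum is the odd one out, or ``unresolved'' if all three agree) to be tree-derived, at least to the extent that some pair of leaves lies together in every quartet. I would prove it by a further induction on $n$: take a cherry pair $\{p,q\}$ of $D$ restricted to $\{1,\dots,n-1\}$ and inspect the quartets $\{p,q,x,n\}$ for $x\le n-1$. If one of them is of type $pq\,|\,xn$ or unresolved, then $D_{p,n}-D_{q,n}=\varepsilon$ and $\{p,q\}$ is still a cherry pair of $D$; otherwise every such quartet separates $p$ from $q$, and one must combine the four-point condition on the quartets $\{p,x,y,n\}$, $\{q,x,y,n\}$ and $\{p,q,x,y\}$ to exhibit a different cherry pair, ruling out the configurations in which the quartet splits would be mutually incompatible. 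Carrying this dichotomy through carefully --- equivalently, showing that a four-point--consistent quartet system comes from a tree --- is where the difficulty lies and is the heart of the argument.
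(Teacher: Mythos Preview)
The paper does not prove this theorem; it is quoted from \cite{B-S} and used as a black box in the proofs of Theorems~\ref{treesdisstr} and~\ref{treesnost}. So there is no proof in the paper to compare your attempt against, and I can only assess your sketch on its own merits.

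Your necessity argument is correct, and steps (ii) and (iii) of the sufficiency --- the merge and the regraft --- are routine and correctly set up. The genuine gap is precisely where you place it yourself: step (i), the existence of a cherry pair. You have not proved it; you have described the opening move of a case analysis and then written that ``carrying this dichotomy through carefully \dots\ is where the difficulty lies and is the heart of the argument.'' That is an admission that the proof is incomplete, and the missing part is not a formality. Concretely: if the inherited cherry $\{p,q\}$ fails to extend because every quartet $\{p,q,x,n\}$ separates $p$ from $q$, the split could be $px\,|\,qn$ for some $x$ and $pn\,|\,qy$ for other $y$; in the uniform subcases one does get $\{q,n\}$ or $\{p,n\}$ as a new cherry, but in the mixed subcase you must bring in the quartets $\{p,x,y,n\}$, $\{q,x,y,n\}$, $\{x,y,p,q\}$ and argue that the resulting split pattern is inconsistent unless some further pair is a cherry. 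That argument --- essentially that a four-point-consistent quartet system is transitive and saturated in the sense used in the paper, hence tree-derived --- is exactly the nontrivial combinatorics underlying results such as Lemma~\ref{sistug} here or the quartet--tree theorems in \cite{Dresslibro} and \cite{C-S}, and it has to be written out for the induction to close. As it stands, the proposal is a correct outline with its central lemma left unproved.
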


Recently Baldisserri characterized the families $\{D_{I}\}_{I \in {\{1,...,n\} \choose 2}}$ that are the families of the  $2$-weights of
positive-weighted trees with exactly $n$ vertices, see \cite{Ba}.

Finally we want to mention that recently 
$k$-weights of weighted graphs for $k \geq 3$ have been introduced and studied; in particular there are some results concerning the characterization of families of $k$-weights,
see for instance \cite{B-C}, \cite{B-R}, \cite{B-R2},   \cite{H-H-M-S}, \cite{Iri}, 
\cite{L-Y-P}, \cite{P-S}, \cite{Ru1}, and \cite{Ru2}.

In this paper, we study when there exists a weighted graph with 
$2$-weights  in given ranges;
 this problem can be of interest because the data one can get 
 from experiments  are obviously  not precise, on the contrary 
 they can vary in a range.
Precisely,
let $\{m_I\}_{I  \in {\{1,...,n\} \choose 2}}$ and
$\{M_I\}_{I  \in {\{1,...,n\} \choose 2}}$ be
two families of positive real numbers parametrized by the $2$-subsets of 
$ \{1,..., n\}$ with $m_I \leq M_I$ for any $I$;  in \S3 we study when 
there exist a weighted graph ${\cal G}$
and an  $n$-subset $\{1,..., n\}$ of  the set of its vertices such that 
$D_I ({\cal G}) \in [m_I, M_I] $ for any $I  \in {\{1,...,n\} \choose 2}$.
Finally, in \S4 we study the analogous problem for trees, both in the case of positive weights and in the case of general weights.
The treatment of the case of trees turns out to be much more complicated and long than the case of graphs.


\section{Preliminaries}

\begin{nota} \label{notainiziali}


$\bullet$  For any $n \in \N-\{0\}$, let $[n]= \{1,..., n\}$.

$\bullet$  For any set $S$ and $k \in \mathbb{N}$,  let ${S \choose k}$
be the set of the $k$-subsets of $S$.

$\bullet$ For any family of real numbers or unknowns parametrized  by  ${[n]  \choose 2}$,  $\{x_{\{i,j\}}\}_{\{i,j\} \in {[n] \choose 2 }}$,  we denote $x_{\{i,j\}}$ by $x_{i,j}$ for any order of $i$ and $j$.

$\bullet$ Throughout the paper, the word ``graph'' will denote a finite simple connected graph.

$\bullet$ Let $T$ be a tree and let $S$ be a subset of $L(T)$. We denote by $T|_S$ the minimal subtree 
of $T$ whose set of vertices  contains $S$.

$\bullet$ Let $T$ be a tree.
We say that two leaves  $i$ and $j$ of $T$ are neighbours
if in the path  joining $i$ and $j$ there is only one vertex of degree greater than or equal to $3$. 

\end{nota}

The following theorem (see \cite{Ca}) and the following lemma will be useful to solve our problem in the case of trees.

\begin{thm} {\bf (Carver)}  Let $L_i (x_1,....,x_t) $ for $i=1,....,s$ be  polynomials 
of degree $1$ in $x_1,....., x_t$. The system of inequalities
$$ \left\{ \begin{array}{l} 
L_1 (x_1,...., x_t) >0  \\
...... \\...... \\
L_s (x_1,...., x_t) >0 
\end{array} \right.$$
is solvable if and only if there does not exist a set of $ s+1 $ constants, $c_1,....., 
c_{s+1}$, such that $$  \sum_{i=1,....,s} c_i L_i (x_1,...., x_t) + c_{s+1} \equiv 0, $$
at least one of the $c$'s being positive and none of them being negative.
\end{thm}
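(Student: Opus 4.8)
The plan is to recognize this as a transposition theorem of the Gordan--Stiemke type and to prove it via a separating-hyperplane argument, treating the two implications separately.

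The direction ``a certificate exists $\Rightarrow$ the system is unsolvable'' I would dispatch first, as it is essentially immediate. Suppose $c_1,\dots,c_{s+1}$ are nonnegative, not all zero, and $\sum_{i=1}^s c_iL_i+c_{s+1}\equiv 0$. Then for any candidate solution $x=(x_1,\dots,x_t)$ the quantity $\sum_{i=1}^s c_iL_i(x)+c_{s+1}$ is a sum of nonnegative terms; since the $c$'s do not all vanish and each $L_i(x)>0$, it is in fact strictly positive, contradicting the identity evaluated at $x$. Hence no solution exists.

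For the converse I would assume the system is unsolvable and manufacture the certificate. Viewing each $L_i$ as an affine function, form the affine map $\Phi\colon\R^t\to\R^s$, $\Phi(x)=(L_1(x),\dots,L_s(x))$; then the system is solvable precisely when the image $A:=\Phi(\R^t)$ meets the open orthant $\R_{>0}^s$, which by hypothesis it does not. Since $A$ is an affine subspace, hence a nonempty closed convex set, and $\R_{>0}^s$ is a nonempty open convex set disjoint from it, the separating hyperplane theorem yields a nonzero $c=(c_1,\dots,c_s)$ with $\sup_{y\in A}\langle c,y\rangle\le\inf_{y\in\R_{>0}^s}\langle c,y\rangle$. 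Because $A\ne\emptyset$ the right-hand side is finite, so $\langle c,\cdot\rangle$ is bounded below on $\R_{>0}^s$, which forces $c_k\ge 0$ for all $k$ (a negative coordinate would send $\langle c,y\rangle$ to $-\infty$); as $c\ne 0$, some $c_k>0$, and then $\inf_{y\in\R_{>0}^s}\langle c,y\rangle=0$. Consequently $\langle c,\Phi(x)\rangle=\sum_{i=1}^s c_iL_i(x)\le 0$ for every $x$; being an affine function of $x$ that is bounded above, it must be a constant $\beta\le 0$. Setting $c_{s+1}:=-\beta\ge 0$ then gives $\sum_{i=1}^s c_iL_i+c_{s+1}\equiv 0$ with all coefficients nonnegative and at least one of $c_1,\dots,c_s$ positive --- exactly the forbidden certificate, a contradiction. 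Therefore $A$ meets $\R_{>0}^s$ and the system is solvable.

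Two comments. A fully elementary alternative would replace the separation theorem by induction on the number $t$ of variables via Fourier--Motzkin elimination: the case $t=0$ is trivial, and the inductive step eliminates $x_t$ by pairing each inequality whose $x_t$-coefficient is positive with each whose $x_t$-coefficient is negative, replacing each such pair by the obvious nonnegative combination to obtain an equivalent system in $x_1,\dots,x_{t-1}$, and then lifting its certificate back. Whichever route one takes, the main obstacle is not producing some vanishing linear combination of the $L_i$ but controlling the sign conditions on its coefficients --- ensuring all $c_i\ge 0$ and that they are not all zero, so that the identity one lands on is genuinely $\sum c_iL_i+c_{s+1}\equiv 0$ with a positive $c$ rather than the vacuous $0\equiv 0$. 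In the separation proof this is exactly what the nonvanishing of the separating vector $c$ (together with the automatic $c\ge 0$) secures; in the Fourier--Motzkin version it amounts to tracking that the positivity of a multiplier survives each elimination step.
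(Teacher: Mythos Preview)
Your proof is correct. Note, however, that the paper does not supply its own proof of this statement: Carver's theorem is quoted as a preliminary tool with a citation to \cite{Ca}, and is used as a black box in the proofs of Theorems \ref{treesdisstr} and \ref{treesnost}. So there is nothing in the paper to compare your argument against.

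For what it is worth, your separating-hyperplane argument is the standard modern route and is clean; the only point where a grader might ask for one more line is in the easy direction, where ``strictly positive'' deserves the explicit case split (either some $c_i>0$ with $i\le s$, giving $c_iL_i(x)>0$, or else $c_1=\cdots=c_s=0$ and hence $c_{s+1}>0$). The Fourier--Motzkin alternative you sketch is closer in spirit to what the paper actually exploits elsewhere (Remark~\ref{eliminating} and Lemma~\ref{stnost} are exactly elimination arguments), and is presumably nearer to Carver's 1922 proof, but either approach is perfectly adequate here.
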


\begin{rem}  \label{eliminating} Let $S$ be a system of linear inequalities in $x_1,...., x_t$. We can write it as follows:
$$  \left\{ \begin{array}{l}
x_t > L_1 (x_1,...., x_{t-1}) \\
.......\\
.......\\
x_t > L_s (x_1,...., x_{t-1})\\ 
x_t < M_1(x_1,...., x_{t-1})\\
.......\\
.......\\
x_t < M_r(x_1,...., x_{t-1}) \\
N_1 (x_1,...., x_{t-1}) >0 \\
.......\\
.......\\
N_p (x_1,...., x_{t-1}) >0 \\
\end{array} \right. $$ 
for some linear polynomials $L_i, M_j, N_l$ in $x_1, ...., x_{t-1}$. The system $S$ is solvable if and only if the system in $x_1,..., x_{t-1}$  given by the inequalities 
$$ M_j (x_1,...., x_{t-1})> L_i (x_1,...., x_{t-1}),$$ for $i=1,...., s$, $j=1,...., r$,
and the inequalities 
$$N_l (x_1,...., x_{t-1}) >0 ,$$
for $l=1,....., p$,
  is solvable.  We get an analogous statement  if we replace 
some of the strict inequalities with  nonstrict inequalities.
\end{rem}

\begin{lem} \label{stnost} Let  $z_1,...., z_s,t \in \N-\{0\}$ and let 
$L_i (x_1,..., x_t)$ for $i=1,...., r$  be polynomials of degree $1$
in the unknowns $x_1,...., x_t$.
If, for any $ \varepsilon >0$, the system 
\begin{equation} \label{sysesp}
  \left\{ \begin{array}{l}
L_1 (x_1,...., x_t) > - z_1 \varepsilon \\
.......\\
.......\\
L_s (x_1,...., x_t) > - z_s \varepsilon \\
L_{s+1} (x_1,...., x_t)  \geq 0 \\
.......\\
.......\\
L_{r} (x_1,...., x_t)  \geq 0 
\end{array} \right.
\end{equation}
is solvable, then also the system 
\begin{equation} \label{sys0}
  \left\{ \begin{array}{l}
L_1 (x_1,...., x_t) \geq 0 \\
.......\\
.......\\
L_s (x_1,...., x_t) \geq 0 \\
L_{s+1} (x_1,...., x_t)  \geq 0 \\
.......\\
.......\\
L_{r} (x_1,...., x_t)  \geq 0 
\end{array} \right. 
\end{equation} 
is solvable.
\end{lem}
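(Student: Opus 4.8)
The plan is to deduce the solvability of system (\ref{sys0}) from that of system (\ref{sysesp}) for all $\varepsilon>0$ by combining Carver's theorem with a limiting/compactness argument on the coefficients of a hypothetical certificate of unsolvability. First I would argue by contradiction: suppose (\ref{sys0}) is \emph{not} solvable. I would like to apply Carver's theorem, but Carver's theorem as stated concerns strict inequalities, so the first technical step is to reduce to that setting. One clean way is to observe that each nonstrict inequality $L_i\ge 0$ is equivalent, within the solvability question, to a family of strict ones; more precisely, I would use the standard fact (provable from Carver, or by a separate small argument) that a system of the mixed strict/nonstrict type $\{L_i>0\ (i\le s),\ L_i\ge 0\ (s<i\le r)\}$ is unsolvable if and only if there exist constants $c_1,\dots,c_{r+1}$, not all zero, all $\ge 0$, with $\sum_i c_i L_i + c_{r+1}\equiv 0$ and, in addition, $c_j>0$ for at least one $j$ among those indices where the inequality is strict \emph{or} $c_{r+1}>0$. (This is the natural ``mixed'' version of the Motzkin/Carver alternative; one proves it by perturbing the nonstrict inequalities to $L_i\ge -\delta$ and passing to the limit, exactly the kind of argument used here.)

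Granting such a certificate alternative, the main step is as follows. For each $\varepsilon>0$, system (\ref{sysesp}) is solvable; I would rewrite it in the homogeneous form $L_i(x)+z_i\varepsilon>0$ for $i\le s$ and $L_i(x)\ge 0$ for $s<i\le r$, i.e.\ as a genuine linear system whose affine parts depend continuously (indeed linearly) on $\varepsilon$. Now, if (\ref{sys0}) were unsolvable, the mixed Carver alternative produces nonzero nonnegative constants $c_1,\dots,c_s,c_{s+1},\dots,c_r,c_{r+1}$ with
\[
\sum_{i=1}^{r} c_i L_i(x_1,\dots,x_t) + c_{r+1} \equiv 0 .
\]
Here ``$\equiv 0$'' means the linear forms cancel \emph{and} the constant terms cancel. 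The key observation is that this same relation, applied to the $\varepsilon$-system, yields
\[
\sum_{i=1}^{r} c_i L_i(x) + c_{r+1} + \varepsilon\sum_{i=1}^{s} c_i z_i \;\equiv\; \varepsilon\sum_{i=1}^{s} c_i z_i \;\ge\; 0 .
\]
I claim that $\sum_{i=1}^{s} c_i z_i>0$: indeed the $z_i$ are positive integers and not all of $c_1,\dots,c_{r+1}$ vanish; if $c_1=\cdots=c_s=0$ then the certificate would already witness unsolvability of the system $\{L_i\ge0: s<i\le r\}$, hence also of (\ref{sysesp}) for every $\varepsilon$, contradicting the hypothesis. (A small case-check is needed if $c_{r+1}>0$ but all $c_i=0$ for $i\le s$; then $\sum_{i>s}c_iL_i\equiv -c_{r+1}<0$ which is again incompatible with solvability of the tail system.) So $\sum_{i=1}^{s}c_iz_i>0$, and we have a \emph{strictly positive} constant certificate $\sum c_i (L_i+z_i\varepsilon) \equiv \varepsilon\sum_{i\le s}c_iz_i>0$ with all $c_i\ge 0$ not all zero — this is precisely the obstruction in the Carver alternative showing (\ref{sysesp}) is unsolvable for that (indeed every) $\varepsilon$, contradicting the hypothesis. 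Therefore (\ref{sys0}) is solvable.

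The step I expect to be the main obstacle is getting the \emph{mixed} strict/nonstrict version of Carver's theorem precisely right — Carver's theorem is quoted only for strictly-positive systems, so I must either cite or prove the version allowing ``$\ge 0$'' constraints, and be careful about exactly which coefficient is forced to be positive in the certificate. An alternative route that sidesteps this, and which I would mention as a backup, is purely elementary: for each $k\in\N$ pick a solution $x^{(k)}$ of (\ref{sysesp}) with $\varepsilon=1/k$; then massage these to lie in a compact set (normalize, or intersect with a large ball using that the feasible region, if unbounded, contains a ray along which all the $L_i$ are nonnegative, which already solves (\ref{sys0})) and extract a convergent subsequence whose limit satisfies all $L_i\ge 0$. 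Between the two, the Carver-certificate argument is shorter and matches the paper's stated toolkit, so I would present that as the main proof and keep the compactness argument in reserve.
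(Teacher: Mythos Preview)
Your argument is essentially correct but takes a genuinely different route from the paper. The paper proves the lemma by induction on the number $t$ of unknowns, using Fourier--Motzkin elimination (Remark~\ref{eliminating}): eliminating $x_t$ from the $\varepsilon$-system (\ref{sysesp}) produces a system in $x_1,\dots,x_{t-1}$ of the same shape, the induction hypothesis says its $\varepsilon=0$, all-nonstrict version is solvable, and that version is exactly what one obtains by eliminating $x_t$ from (\ref{sys0}). No duality is invoked at all.

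Your approach, by contrast, is a Farkas/Carver duality argument: an infeasibility certificate for (\ref{sys0}) yields, after evaluating on a solution of (\ref{sysesp}), the inequality $c_{r+1}\le \varepsilon\sum_{i\le s}c_i z_i$ for every $\varepsilon>0$, forcing $c_{r+1}\le 0$ and contradicting the certificate. This is clean and conceptually transparent, but, as you correctly flag, it hinges on the nonstrict (Farkas) alternative rather than Carver's strict-inequality theorem as stated in the paper; you must either import Farkas' lemma or derive it, and the derivation you sketch (perturb and pass to the limit) is morally the same kind of statement as the lemma itself. The paper's inductive proof avoids this by staying entirely on the primal side, so it is more self-contained relative to the tools actually quoted. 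A minor clean-up: once you have the certificate $\sum c_iL_i\equiv -c_{r+1}$ with $c_{r+1}>0$, you do not need to reinvoke a Carver-type alternative for (\ref{sysesp}); simply plugging in a solution of (\ref{sysesp}) already gives the contradiction, and the case split on whether $c_1=\cdots=c_s=0$ is then unnecessary. Your backup compactness argument, on the other hand, is underspecified: a nested sequence of nonempty closed polyhedra need not have nonempty intersection without further control, so that sketch would require more care.
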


\begin{proof} We prove the statement by induction on $t$.

The statement in the case $t=1$ is easy to prove.
Let us prove the induction step $ t-1 \Rightarrow t$. 
Suppose that, for any $ \varepsilon >0$, the system 
 (\ref{sysesp}) is solvable; then also the system (in $x_1, ...., x_{t-1}$) 
 we get from it by ``eliminating'' the unknown $x_t$ (see Remark  \ref{eliminating}) is solvable. By induction assumption also the system we get from it by replacing $>$ with $\geq$ and putting $ \varepsilon =0$ is solvable. But this last system is exactly the system we get from (\ref{sys0}) 
by  eliminating the unknown $x_t$. So also (\ref{sys0})  is solvable.
\end{proof}

\section{The case of graphs}

\begin{thm}
Let $\{m_I\}_{I  \in {[n] \choose 2}}$ and
$\{M_I\}_{I  \in {[n] \choose 2}}$ be
two families of positive real numbers  with $m_I \leq M_I$ for any $I$. 
There exist a positive-weighted graph ${\cal G}$
and an  $n$-subset $[n]$ of  the set of its vertices such that 
$D_I ({\cal G}) \in [m_I, M_I] $ for any $I  \in {\{1,...,n\} \choose 2}$ if
and only if for any $i,j \in [n]$ with $ i \neq j$ we have
$$ m_{i,j} \leq M_{i,t_1} + M_{t_1, t_2}+  ... + M_{t_{k-1}, t_k}+ 
M_{t_k, j}$$ for any $k \in \N $ and $ t_1,..., t_k \in [n]-\{i,j\}$ with $t_{\alpha}
\neq t_{\alpha +1}$ for any $ \alpha=1,..., k-1$.

\end{thm}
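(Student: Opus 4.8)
The necessity direction is the easy one. Suppose such a positive-weighted graph $\mathcal{G}$ exists with $D_I(\mathcal{G}) \in [m_I, M_I]$ for all $I$. Fix distinct $i, j \in [n]$ and any sequence $t_1, \dots, t_k \in [n] - \{i,j\}$ with $t_\alpha \neq t_{\alpha+1}$. For each consecutive pair in the chain $i, t_1, t_2, \dots, t_k, j$ pick a simple path in $G$ realizing (or having weight at most) the corresponding $D$-value; concatenating these paths gives a walk from $i$ to $j$ of total weight at most $M_{i,t_1} + M_{t_1,t_2} + \dots + M_{t_k,j}$. Since all weights are positive, any walk from $i$ to $j$ contains a simple path from $i$ to $j$ of no greater weight, so $m_{i,j} \leq D_{i,j}(\mathcal{G}) \leq M_{i,t_1} + \dots + M_{t_k,j}$, which is exactly the claimed inequality. (Note the hypothesis forces each $D_{i,j}$ to be finite and positive, consistent with connectedness.)

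For sufficiency, the plan is to build an explicit graph. The natural candidate is essentially the complete graph $K_n$ on vertex set $[n]$, with the edge $\{i,j\}$ weighted by $M_{i,j}$ — or rather, to be safe about the triangle-inequality obstruction, I would take the weighted graph whose distance function is the "shortest-path closure" of the assignment $w(\{i,j\}) = M_{i,j}$. Concretely, define $D_{i,j}$ to be the minimum over all chains $i = s_0, s_1, \dots, s_\ell = j$ (with consecutive vertices distinct) of $M_{s_0,s_1} + \dots + M_{s_{\ell-1},s_\ell}$. One checks that this $D$ satisfies the triangle inequalities, so by the Hakimi–Yau theorem (Theorem \ref{Hakimi-Yau}) there is a positive-weighted graph $\mathcal{G}$ with $[n]$ among its vertices and $D_{i,j}(\mathcal{G}) = D_{i,j}$. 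It remains to verify $D_{i,j} \in [m_{i,j}, M_{i,j}]$. The upper bound $D_{i,j} \leq M_{i,j}$ is immediate by taking the one-edge chain. The lower bound $D_{i,j} \geq m_{i,j}$ is precisely where the hypothesis is used: a chain realizing $D_{i,j}$ that avoids $i$ and $j$ in its interior is covered directly by the assumed inequality, and a general chain must be reduced to this case.

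The main obstacle, and the one step I expect to need real care, is handling chains $i = s_0, s_1, \dots, s_\ell = j$ whose interior vertices are \emph{not} confined to $[n] - \{i,j\}$ — i.e. chains that revisit $i$ or $j$, or that may pass through repeated vertices — when showing $D_{i,j} \geq m_{i,j}$. The fix is a reduction argument: given any such chain, if some interior $s_\alpha$ equals $i$ or $j$, delete the closed sub-chain between the two occurrences (using positivity of the $M$'s, the weight only drops), and likewise shortcut any repeated interior vertex; after finitely many reductions one is left with a chain whose interior vertices are distinct and lie in $[n] - \{i,j\}$, to which the hypothesis applies, giving total weight $\geq m_{i,j}$. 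Since the minimum defining $D_{i,j}$ is attained on such a reduced chain, $D_{i,j} \geq m_{i,j}$ follows. I would also remark that one may take $\mathcal{G}$ to be $K_n$ itself with weights $w(\{i,j\}) = M_{i,j}$: the inequality hypothesis guarantees that the shortest-path distance between $i$ and $j$ in this $K_n$ is still at least $m_{i,j}$, so no auxiliary subdivision vertices are actually needed, which gives a cleaner statement of the construction.
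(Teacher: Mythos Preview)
Your proof is correct and follows essentially the same route as the paper: define the shortest-$M$-chain closure $\tilde M_{i,j}$, observe it satisfies the triangle inequalities, invoke Hakimi--Yau, and check $m_{i,j}\le \tilde M_{i,j}\le M_{i,j}$. The only cosmetic difference is where the ``reduction'' work sits---the paper restricts chains to have interior vertices in $[n]\setminus\{i,j\}$ from the outset (making the lower bound immediate and pushing the case analysis into the triangle inequality), whereas you allow arbitrary chains (making the triangle inequality automatic and putting the shortcut argument into the lower bound); your closing remark that $K_n$ with edge weights $M_{i,j}$ already realizes the required distances is a clean observation the paper leaves implicit behind the citation of Hakimi--Yau.
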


\begin{proof}
$\Rightarrow$ Suppose there exist a positive-weighted graph ${\cal G}$
and an  $n$-subset $\{1,..., n\}$ of  the set of its vertices such that 
$D_I ({\cal G}) \in [m_I, M_I] $ for any $I  \in {\{1,...,n\} \choose 2}$. 
We recall that, for the $D_I ({\cal G}) $, the triangle inequalities hold, see 
Theorem \ref{Hakimi-Yau}.
Then, for any  $i,j \in [n]$ with $ i \neq j$, 
$$ \begin{array}{ll}
m_{i,j} \leq D_{i,j} ({\cal G}) & \leq  D_{i,t_1}  ({\cal G})+ D_{t_1, j}   ({\cal G}) \leq \\
& \leq............................. \leq\\ & 
\leq  D_{i,t_1}  ({\cal G})+ D_{t_1, t_2}  ({\cal G})+  ..... + D_{t_{k-1}, t_k}
 ({\cal G})+ D_{t_k, j}  ({\cal G}) \leq \\
&  \leq
  M_{i,t_1} + M_{t_1, t_2}+  ..... + M_{t_{k-1}, t_k}+ M_{t_k, j}
  \end{array}$$
for any $k \in \N $ and  $ t_1,..., t_k \in [n]-\{i,j\}$ with $t_{\alpha}
\neq t_{\alpha +1}$ for any $ \alpha=1,..., k-1$.

$\Leftarrow$ 
Let us define, for any $i,j \in [n]$ with $ i \neq j$,
$$ \tilde{M}_{i,j} : = min \{  M_{i,t_1} + M_{t_1, t_2}+  ... + M_{t_{k-1}, t_k}+ M_{t_k, j}\}_{k \in \N , \;\;t_1,..., t_k \in [n]-\{i,j\} ,\;\;\;t_{\alpha} \neq
t_{\alpha +1} \;\;\forall \alpha=1,..., k-1}.$$
It is easy to see that the $ \tilde{M}_{i,j} $ satisfy the triangle inequalities
$ \tilde{M}_{i,j} \leq  \tilde{M}_{i,o} +  \tilde{M}_{o,j} $ for any distinct $i,j,o 
\in [n]$, 
in fact:
$$ \begin{array}{l} min \{  M_{i,t_1} + M_{t_1, t_2}+  ..... + M_{t_{k-1}, t_k}+ M_{t_k, j}\}_{k \in \N , \;\;t_1,..., t_k \in [n]-\{i,j\} ,\;\;\;t_{\alpha} \neq
t_{\alpha +1} \;\;\forall \alpha=1,..., k-1} \leq \vspace*{0.3cm} \\ 
\hspace*{1cm} \leq 
M_{i,v_1} + M_{v_1, v_2}+  ..... + M_{v_{r-1}, v_r}+ M_{v_r, o}+ 
M_{o ,w_1} + M_{w_1, w_2}+ ..... + M_{w_{s-1}, w_s}+ M_{w_s, j}
\end{array}$$
for any $r,s \in \N$, $v_{\alpha} \in [n] -\{i,o\}$ for $ \alpha = 1,....., r$, $v_{\alpha} \neq v_{\alpha +1} $ for $ \alpha = 1,....., r-1$ , $w_{\alpha} \in [n] -\{o,j\}$ for $ \alpha = 1,....., s$, 
$w_{\alpha} \neq w_{\alpha +1} $ for $ \alpha = 1,....., s-1$
(consider two cases: the case where no one of the $v_{\alpha}$ and the 
$w_{\alpha}$ is in $\{i,j\}$ and the case where at least one of the $v_{\alpha}$ or  the 
$w_{\alpha}$ is in $\{i,j\}$). 
So, by Theorem \ref{Hakimi-Yau}, there exists
a positive-weighted graph ${\cal G}$ such that $ D_{i,j} ({\cal G}) =
\tilde{M}_{i,j}$ for any  $i,j \in [n]$ with $ i \neq j$.
By our assumption, we have that $\tilde{M}_{i,j}
 \geq m_{i,j}$ for any  $i,j \in [n]$ with $ i \neq j$ and obviously $\tilde{M}_{i,j} \leq M_{i,j}$ for any  $i,j \in [n]$ with $ i \neq j$, so we conclude.
\end{proof}

\section{The case of trees}

\begin{defin} Let $X$ be  a set and let $Y $ be a $4$-subset of $ X$
(a quartet).  A {\bf (quartet) split} of  $Y $  is a partition of $Y$ into two disjoint $2$-subsets. We denote the split $ \{\{a,b\} , \{c,d\}\}$ simply 
by $(a,b \;|\; c,d)$.

Let $S$ be a system (that is, a set) of splits of the quartets of $X$.

We say that $S$ is {\bf fat} if, for every quartet of $X$,
either exactly one of its splits or all its splits are in $S$.

Following \cite{Dresslibro}, Ch. 3,
we say that $S$ is {\bf transitive} if, for any distinct $a,b,c,d,e \in X$,
the following implication holds:
$$(a, b \;| \;c, d) \in S \mbox{\ and } (a, b \;| \;c, e) \in S \; \Longrightarrow \; (a, b \;| \; d, e) \in S.$$ 

Following again \cite{Dresslibro},
we say that $S$ is {\bf saturated} if, for any distinct $a_1, a_2 , b_1 , b_2 , x \in X$, the following implication holds: $$(a_1, a_2 \;| \;b_1, b_2) \in S
\Longrightarrow  \mbox{ either } (a_1, x \;| \;b_1, b_2) \in S \mbox{ or } (a_1, a_2 \;| \;b_1, x) \in S.$$ 
\end{defin}

The statement of the following lemma is similar to the characterization
of the system of the splits of the quartets coming from trees (with a slight difference in the definition of the splits of a quartet of  leaves of a tree), see \cite{Dresslibro} Thm. 3.7 and \cite{C-S}.

\begin{lem}  \label{sistug}
Let $n \in \N$, $n \geq 4$. Let $S$ be a system of 
splits of the quartets of $[n]$. Suppose $S$ is fat, transitive and saturated.
Then the linear system in the unknowns $x_I$ for $I \in {[n] \choose 2}$ given by the equations $$ x_{a,c} - x_{b,c} = x_{a,d} - x_{b,d}$$ 
for any $ (a, b\; |\; c ,d ) \in S$  has a nonzero solution.
\end{lem}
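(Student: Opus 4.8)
The plan is to build an explicit nonzero solution $\{x_I\}$ to the system by realizing the equations as the "four-point" relations of a weighted tree-like structure attached to $S$, and then invoking transitivity/saturation to check consistency. Concretely, I would first observe that each equation $x_{a,c}-x_{b,c}=x_{a,d}-x_{b,d}$ coming from a split $(a,b\,|\,c,d)\in S$ is exactly the statement that "the Gromov product / overlap of $a$ and $b$ seen from $c$ and from $d$ agree." So the natural candidate is to look for a solution of the form $x_{i,j}=f_i+f_j-2\,g_{i,j}$ for suitable numbers $f_i$ and $g_{i,j}$, or even simpler, to try to find a function that makes all the split equations simultaneously satisfiable by reducing to the structure of the relation "$a,b$ are on the same side across many splits."

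The key steps, in order, are: (1) Use fatness to partition the quartets of $[n]$ into those with a single split in $S$ (the "resolved" quartets) and those with all three splits in $S$ (the "unresolved" ones); note that for an unresolved quartet $\{a,b,c,d\}$ the three equations $x_{a,c}-x_{b,c}=x_{a,d}-x_{b,d}$, $x_{a,b}-x_{c,b}=x_{a,d}-x_{c,d}$, $x_{a,b}-x_{d,b}=x_{a,c}-x_{d,c}$ together force the three sums $x_{a,b}+x_{c,d}$, $x_{a,c}+x_{b,d}$, $x_{a,d}+x_{b,c}$ to be equal, i.e. the equations impose exactly the Bandelt--Steel condition of Theorem \ref{Bandelt-Steel}. (2) Show, using transitivity and saturation, that $S$ is the split system of some tree $T$ with leaf set $[n]$ (this is the content alluded to by the remark before the lemma, matching \cite{Dresslibro} Thm.~3.7): transitivity gives that the relation of "being separated from a fixed pair" is well-behaved, and saturation prevents the obstruction to realizability by a tree. (3) Pick any positive edge weights on $T$ and let $D_I=D_I(\mathcal T)$; then set $x_I:=D_I$. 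Since $T$ realizes $S$, for every $(a,b\,|\,c,d)\in S$ the path structure in $T$ gives $D_{a,c}-D_{b,c}=D_{a,d}-D_{b,d}$ (both sides equal the signed difference of the positions of $a$ and $b$ relative to the point where the $c$--$d$ path meets the rest), so $\{x_I\}$ solves the system; and it is nonzero because the weights are positive and $n\ge 4$ forces at least one $D_I>0$.

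The main obstacle I expect is step (2): turning the purely combinatorial hypotheses "fat, transitive, saturated" into an actual tree $T$ whose induced quartet splits are precisely $S$. One must be careful about the slightly nonstandard convention for the split of a quartet of leaves (mentioned in the text), and about unresolved quartets, which correspond to four leaves whose median is a single vertex of degree $\ge 4$ in $T$; fatness is what guarantees every quartet is either cleanly resolved or fully unresolved, with nothing in between. An alternative, more self-contained route that avoids building $T$ explicitly would be to run an induction on $n$: fix the element $n$, apply the inductive hypothesis to $S$ restricted to $[n-1]$ to get a solution $\{x_I\}_{I\in\binom{[n-1]}{2}}$, and then show that transitivity and saturation let one extend consistently to the values $x_{i,n}$ for $i\in[n-1]$ — the consistency of this extension is again where the two hypotheses do their work, essentially forcing the system $\{x_{i,n}-x_{j,n}=c_{ij}\}$ (with $c_{ij}$ determined by splits $(i,j\,|\,k,n)\in S$) to be compatible around cycles. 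I would likely carry out the induction, since it keeps everything in the language of the linear system and sidesteps the need to reconstruct a tree, but I would mention the tree-realization viewpoint as the conceptual reason the statement is true.
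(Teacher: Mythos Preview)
Your proposal is correct, and the inductive route you say you would ``likely carry out'' is exactly the paper's proof: it fixes $D_{n,1}$ arbitrarily and then successively sets $D_{n,k}:=D_{n,i}+D_{x,k}-D_{x,i}$ whenever some split $(n,x\,|\,k,i)\in S$ with $i<k$ is available, checking well-definedness (your ``compatibility around cycles'') via a case analysis driven by saturation and transitivity. The tree-reconstruction viewpoint you offer as motivation is precisely the conceptual backdrop the paper acknowledges just before the lemma (citing \cite{Dresslibro}, Thm.~3.7), but the paper does not invoke it as a proof, for the reason you yourself flagged: the definition of the induced split of a quartet here differs slightly from the standard one, so quoting the tree-realization theorem would require extra care, whereas the induction stays entirely inside the linear system.
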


\begin{proof}
We prove the statement by induction on $n$.
If $n=4$, the statement is obvious. Let us prove the induction step.
Suppose that $D_I$ for $I \in {[n-1] \choose 2}$ solve 
 the equations $$ x_{a,c} - x_{b,c} = x_{a,d} - x_{b,d}$$ 
for any $ (a, b \;|\; c ,d ) \in S$ with $a,b,c,d \in [n-1]$ and that they are not all zero. We want to find $D_{n,i}$ for $i=1,..., n-1$ such that the $D_I$ for $I \in {[n] \choose 2}$ solve the linear
system given by all the elements of $S$.

Let us define $D_{n,1}$ at random.

Let us define $D_{n,2}$ as follows:

if there does not exist $x \in [n-1]-\{1,2\}$ such that $(n ,x \;|\; 1 ,2 ) \in S$, we define $D_{n,2}$  at random;
 
if there exists $x \in [n-1]-\{1,2\}$ such that $(n ,x \;|\; 1 ,2 ) \in S$, we set
$$ D_{n,2}:= D_{n,1} + D_{x,2} - D_{x,1};$$
 it is a good definition, in fact  if there exists $y \in [n-1]-\{x,1,2\}$ such that $(n ,y \;|\; 1 ,2 ) \in S$, we have that, by the transitivity of $S$,  $(x,y \;|\; 1 ,2 ) \in S$,
 so $$   D_{n,1} + D_{x,2} - D_{x,1}=   D_{n,1} + D_{y,2} - D_{y,1}.$$
 In an analogous way we define the other $D_{n,i}$; precisely, suppose 
  we have defined $ D_{n,1}, ......., D_{n, k-1}$ in such a way 
that $ D_{n,1}, ......., D_{n, k-1}$ and $D_{i,j}$ for $i, j \in [n-1]$ satisfy the equations induced by 
$S$ involving  $ x_{n,1}, ......., x_{n, k-1}$ and  $x_{i,j}$ for $i, j \in [n-1]$;
we define $D_{n,k}$ as follows:

if there do not exist $x \in [n-1]$ and $i \in [k-1]$ with $x \neq k,i$ and
 such that $(n ,x \;|\; k ,i ) \in S$, we define
 $D_{n,k}$  at random;

if there exist $x \in [n-1]$ and $i \in [k-1]$ with $x \neq k,i$ 
 such that $(n ,x \;|\; k ,i ) \in S$, we set
 $$D_{n,k}:= D_{n,i} + D_{x,k} -D_{x,i}. $$  
 We have to show that it is a good definition. Suppose 
 $y \in [n-1]$ and $j \in [k-1]$ with $ y \neq k,j$ are such that $(n ,y \;|\; k ,j ) \in S$; we have to 
 show that 
 \begin{equation} \label{tesi}
    D_{n,i} + D_{x,k} - D_{x,i}=   D_{n,j} + D_{y,k} - D_{y,j}.
  \end{equation}
 Since $S$ is saturated and transitive, from $(n ,x \;|\; k ,i ) \in S$, we get:
  
 either 
 \begin{equation} \label{1a} 
 (n ,y \;|\; k ,i ) \in S  \hspace*{5mm} \mbox{and}  \hspace*{5mm} (x, y \;|\; k ,i ) \in S \end{equation}
or
 \begin{equation} \label{1b} 
 (n ,x \;|\; k ,y ) \in S  \hspace*{5mm} \mbox{and}   \hspace*{5mm} (n, x \;|\; y ,i ) \in S.
 \end{equation}

From $(n ,x \;|\; k ,i ) \in S$, we get:
 
 either 
 \begin{equation} \label{2a} 
 (n ,j \;|\; k ,i ) \in S  \hspace*{5mm} \mbox{ and }  \hspace*{5mm} (x, j \;|\; k ,i ) \in S
 \end{equation}
or
 \begin{equation} \label{2b} 
 (n ,x \;|\; k ,j ) \in S  \hspace*{5mm} \mbox{ and }   \hspace*{5mm}(n, x \;|\; j ,i ) \in S.
 \end{equation}

From $(n ,y \;|\; k ,j ) \in S$, we get:
 
 either 
 \begin{equation} \label{3a} 
 (n ,x \;|\; k ,j ) \in S  \hspace*{5mm} \mbox{ and }  \hspace*{5mm} (x, y \;|\; k ,j ) \in S
 \end{equation}
or
 \begin{equation} \label{3b} 
 (n ,y \;|\; k ,x ) \in S \hspace*{5mm} \mbox{ and }  \hspace*{5mm} (n, y \;|\; x ,j) \in S.
 \end{equation}

Finally, from $(n ,y \;|\; k ,j ) \in S$, we get:
 
 either 
 \begin{equation} \label{4a} 
 (n ,i \;|\; k ,j ) \in S  \hspace*{5mm} \mbox{ and }  \hspace*{5mm} (y ,i \;|\; k ,j ) \in S
 \end{equation}
or
 \begin{equation} \label{4b} 
(n ,y \;|\; k ,i ) \in S \hspace*{5mm} \mbox{ and }  \hspace*{5mm} (n, y \;|\; i ,j) \in S.
 \end{equation}

If condition (\ref{3a}) holds, we get, from it and from the assumption 
$(n ,x \;|\; k ,i ) \in S$, that also $(n ,x \;|\; i ,j ) \in S $ holds (by the transitivity of $S$). So the statement
(\ref{tesi})  is equivalent to the equality 
$$   D_{x,i} + D_{x,k} - D_{x,i}=   D_{x,j} + D_{y,k} - D_{y,j},$$
which follows from $(x ,y\;|\; k ,j ) \in S$.

If condition (\ref{1a}) holds, we get our statement in an analogous way
(swap $i$ with $j$ and $x$ with $y$).

If condition (\ref{4b}) holds, we get, from it and from the assumption 
$(n ,x \;|\; k ,i ) \in S$, that also $(x,y \;|\; k ,i ) \in S $ holds. 
From the condition that $(n ,y \;|\; i,j ) \in S$,    the statement
(\ref{tesi})  is equivalent to the equality 
$$   D_{y,i} + D_{x,k} - D_{x,i}=   D_{y,j} + D_{y,k} - D_{y,j},$$
which follows from $(x ,y\;|\; k ,i) \in S$.

If condition (\ref{2b}) holds, we get our statement in an analogous way
(swap $i$ with $j$ and $x$ with $y$).

So we can suppose that (\ref{3b}), (\ref{1b}), (\ref{4a}), (\ref{2a}) hold.
From the fact that $(n ,j \;|\; k ,i ) \in S$ (which is true by (\ref{2a})), the fact that $(n ,i\;|\; k ,j ) \in S$ (which is true by (\ref{4a})) and the fatness of $S$, 
we get that $(n ,k \;|\; i ,j ) \in S$. From the condition that $(x ,j \;|\; k,i ) \in S$ (which is true by (\ref{2a})),   the statement (\ref{tesi})  is equivalent to the equality 
$$   D_{n,i} + D_{j,k} - D_{j,i}=   D_{n,j} + D_{y,k} - D_{y,j}.$$
By the condition  $(n ,k \;|\; i ,j ) \in S$, this equality is equivalent to 
$$   D_{k,i} + D_{j,k} - D_{j,i}=   D_{k,j} + D_{y,k} - D_{y,j}.$$
which is true since $(i,y\; |\; k,j)\in S$ (which follows from (\ref{4a})).
\end{proof}

\begin{thm} \label{treesdisstr}
Let $\{m_I\}_{I  \in {\{1,...,n\} \choose 2}}$ and
$\{M_I\}_{I  \in {\{1,...,n\} \choose 2}}$ be
two families of real numbers  with $m_I < M_I$ for any $I$.
There exists a weighted tree ${\cal T}=(T,w)$
with $L(T) = [ n]$ and such that 
$D_I ({\cal T}) \in (m_I, M_I) $ for any $I  \in {\{1,...,n\} \choose 2}$ if
and only if 
there exists a set  $S$ of splits of the quartets of $[n]$ such that 

(i) $S$ is fat, transitive and saturated,

(ii) $$ m_{\sigma_1}+.....+m_{ \sigma_r} < M_{\tau_1} +..... + M_{\tau_r}$$ 
for any $r \in \N-\{0\}$, for any $(\sigma_1,...., \sigma_r) $ and $(\tau_1,...., \tau_r)$ partitions of the same $2r$-subset of $[n]$ into $2$-sets such that $(\sigma_1,...., \sigma_r) $ can be obtained from  $(\tau_1,...., \tau_r)$ with transformations on the $2$-sets
of the following kind:
$$ (i,k) ,( j,l) \mapsto (i,j ),( k,l) $$ for any $ (j,k \; |\; i,l) \in S$. 
\end{thm}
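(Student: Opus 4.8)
The plan is to reduce the problem to the solvability of a large linear system of strict inequalities in the ``distance'' unknowns $x_I$, $I\in\binom{[n]}{2}$, together with the constraint that these $x_I$ be treelike in the sense of Theorem~\ref{Bandelt-Steel}, and then to apply Carver's theorem to translate the unsolvability of that system into the combinatorial obstruction recorded in (ii). For the ``only if'' direction I would start from a weighted tree ${\cal T}$ with $D_I({\cal T})\in(m_I,M_I)$ and define $S$ to be the system of quartet splits \emph{induced by the shape of $T$}: for a quartet $\{a,b,c,d\}$ put $(a,b\,|\,c,d)\in S$ iff the topology of $T|_{\{a,b,c,d\}}$ separates $\{a,b\}$ from $\{c,d\}$ (including the degenerate ``star'' case, where all three splits are declared to be in $S$ — this is exactly why $S$ is \emph{fat}). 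One then checks that $S$ is transitive and saturated; these are standard facts about tree topologies (cf.\ \cite{Dresslibro}, Thm.~3.7), the point being that a four-point condition on the $D_I({\cal T})$ forces the four-leaf subtopologies to fit together consistently. For (ii): the four-point condition says that whenever $(j,k\,|\,i,l)\in S$ one has $D_{i,k}({\cal T})+D_{j,l}({\cal T})=D_{i,j}({\cal T})+D_{k,l}({\cal T})$, so each elementary transformation $(i,k),(j,l)\mapsto(i,j),(k,l)$ preserves the sum $\sum_\alpha D_{\sigma_\alpha}({\cal T})$; hence $\sum_\alpha m_{\sigma_\alpha}<\sum_\alpha D_{\sigma_\alpha}({\cal T})=\sum_\alpha D_{\tau_\alpha}({\cal T})<\sum_\alpha M_{\tau_\alpha}$, which is (ii).

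For the ``if'' direction, suppose $S$ satisfies (i) and (ii). The idea is: among all vectors $(x_I)$ that are treelike with a tree of topology compatible with $S$, we want one with $x_I\in(m_I,M_I)$ for all $I$. By Lemma~\ref{sistug}, the linear system $x_{a,c}-x_{b,c}=x_{a,d}-x_{b,d}$ for $(a,b\,|\,c,d)\in S$ has a nonzero solution space $W$; any $x\in W$ satisfies the Bandelt--Steel four-point condition along the splits of $S$ (one must also argue that for a quartet all of whose splits lie in $S$ — the fat/star case — the corresponding equalities degenerate correctly). One then observes that realizing such an $x$ by an \emph{actual} weighted tree with leaves $[n]$ requires no sign conditions (Theorem~\ref{Bandelt-Steel} has no positivity hypothesis), so the only remaining requirement is the box constraint $m_I<x_I<M_I$ on the affine subspace $W$. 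Thus we must show the system
$$\{\,x_I-m_I>0,\ M_I-x_I>0 \ :\ I\in\tbinom{[n]}{2}\,\}\ \cup\ \{\text{equations defining }W\}$$
is solvable. Writing the equations as pairs of nonstrict inequalities and applying Carver's theorem, unsolvability would produce nonnegative constants, not all zero, giving an identically-zero linear combination $\sum_I c_I(x_I-m_I)+\sum_I d_I(M_I-x_I)+(\text{combination of the }W\text{-equations})+e\equiv 0$. The coefficient of each $x_I$ must vanish, and the $W$-equations are precisely the ``$(i,k),(j,l)\mapsto(i,j),(k,l)$'' rewriting moves; tracking how these moves pair up the $m$'s and $M$'s shows that the surviving constant term $e=\sum d_I M_I-\sum c_I m_I$ must be, up to the equivalence generated by the moves, of the form $\sum_\alpha M_{\tau_\alpha}-\sum_\alpha m_{\sigma_\alpha}$ with $(\sigma)$ obtained from $(\tau)$ by those moves — which by (ii) is strictly positive, contradicting $e\le 0$ (the sign forced by Carver). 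Hence the box system over $W$ is solvable, and Theorem~\ref{Bandelt-Steel} yields the desired tree.

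The main obstacle, and the step I expect to require real care, is the bookkeeping in this last paragraph: showing that an arbitrary nonnegative combination of the $W$-equations, when added to $\sum c_I(x_I-m_I)+\sum d_I(M_I-x_I)$ to kill every $x_I$-coefficient, forces $c$ and $d$ to encode genuine partitions $(\sigma_1,\dots,\sigma_r)$ and $(\tau_1,\dots,\tau_r)$ of a common $2r$-subset related by the allowed moves. This is essentially a flow/cancellation argument on the edges of the ``rewriting graph'' whose moves are indexed by $S$, and it has to be done carefully enough to guarantee the multiset of $2$-subsets carrying the $m$'s and the multiset carrying the $M$'s have equal size and are move-equivalent, so that (ii) applies verbatim. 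A secondary technical point is handling the degenerate (fat/star) quartets uniformly in Lemma~\ref{sistug}'s output and in the four-point verification; and, on the ``only if'' side, verifying transitivity and saturatedness of the topology-induced $S$ in the presence of such degenerate quartets. If one prefers to avoid Carver entirely, an alternative is to argue directly by induction on $n$, eliminating one leaf at a time as in Remark~\ref{eliminating} and Lemma~\ref{stnost}; but the combinatorial condition (ii) makes the closed-form Carver route cleaner to state.
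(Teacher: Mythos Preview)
Your proposal is correct and follows essentially the same route as the paper: define $S$ from the tree topology for $\Rightarrow$, and for $\Leftarrow$ use Lemma~\ref{sistug} to cut down to the affine subspace $W$, impose the box constraints $m_I<x_I<M_I$, and invoke Carver's theorem together with Theorem~\ref{Bandelt-Steel}.

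The one place where the paper does something slightly different from you is in how it feeds the problem to Carver. You keep all $\binom{n}{2}$ unknowns and adjoin the $W$-equations as extra constraints (``as pairs of nonstrict inequalities''); but Carver's theorem, as stated in the paper, applies only to systems of \emph{strict} inequalities, so mixing in nonstrict ones needs a separate justification. The paper sidesteps this by first using Lemma~\ref{sistug} to write the dependent unknowns $D_{I_i}$ as linear functions $f_{I_i}(D_{J_1},\dots,D_{J_t})$ of a set of free unknowns, substituting these into the box constraints, and then applying Carver to the resulting system of purely strict inequalities in $D_{J_1},\dots,D_{J_t}$. This parametrize-and-substitute step is exactly what lets Carver apply verbatim, so it is worth adopting.

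As for the bookkeeping you flag --- showing that a Carver certificate of unsolvability collapses to an instance of (ii) with genuine matchings $(\sigma_1,\dots,\sigma_r)$ and $(\tau_1,\dots,\tau_r)$ related by the allowed moves --- the paper's proof simply asserts ``By condition (ii) there does not exist \dots'' and moves on. So your instinct that this is the step demanding the most care is accurate; the paper treats it as routine and does not spell out the reduction from real coefficients to matchings either.
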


\begin{proof}
$\Rightarrow$ 
Let  ${\cal T}=(T,w)$ be a  weighted tree
with $L(T) = [ n]$ and such that 
$D_I ({\cal T}) \in (m_I, M_I) $ for any $I  \in {\{1,...,n\} \choose 2}$. 
We define $S$ in the following way: for any quartet $\{a,b,c,d\}$ in $[n]$,
we say that $(a, b\; |\; c,d ) \in S$ if and only if $ a$ and $b$ are neighbours 
and $c$ and $d$ are neighbours in $ T|_{a,b,c,d}$. It is easy to see that $S$ 
is fat, transitive and saturated. Furthermore,  
for any $(\sigma_1,...., \sigma_r) $ and $(\tau_1,...., \tau_r)$ partitions of the same subset of $[n]$ into $2$-sets such that $(\sigma_1,...., \sigma_r) $ can be obtained from  $(\tau_1,...., \tau_r)$ with transformations on the $2$-sets
of the  kind
$ (i,k \;|\; j,l) \mapsto (i,j \;| k,l) $ for any $ (j,k \; |\; i,l) \in S$, we have: 
$$ m_{\sigma_1}+.....+m_{ \sigma_r} < D_{\sigma_1} ({\cal T}) + ......+ D_{\sigma_r} ({\cal T}) = D_{\tau_1} ({\cal T}) + ......+ D_{\tau_r} ({\cal T}) <
  M_{\tau_1} +..... + M_{\tau_r},$$ 
hence (ii) holds.

$\Leftarrow$ 
By Lemma \ref{sistug}, the linear system given by the
equations $$ D_{a,c} - D_{b,c} = D_{a,d} - D_{b,d}$$ 
for any $ (a, b\; |\; c ,d ) \in S$ has nonzero solutions. So we can write some unknowns, 
$D_{I_1},....., D_{I_s}$,
 in function of some others: $D_{J_1},   .........,  D_{J_t}$ for some $t \geq 1$:
let $$D_{I_i} = f_{I_i} (D_{J_1},........, D_{J_t})$$ for $i=1,...,s$.
    Consider the following system of inequalities in $D_{J_1},........, D_{J_t}$:
   \begin{equation} \label{sys}
   \left\{ \begin{array}{l}
    D_{J_1} - m_{J_1} >0  \\
      ........\\ 
        ........\\ 
       D_{J_t} - m_{J_t} >0  \\
    f_{I_1} (D_{J_1},........, D_{J_t}) -m_{I_1} >0\\
      ........\\ 
        ........\\ 
          f_{I_s} (D_{J_1},........, D_{J_t}) -m_{I_s} >0 \\
            -D_{J_1} +M_{J_1} >0  \\
    ........\\ 
      ........\\  
       -D_{J_t} +M_{J_t} >0  \\
    -f_{I_1} (D_{J_1},........, D_{J_t}) +M_{I_1} >0\\
      ........\\ 
      ........\\  
         - f_{I_s} (D_{J_1},........, D_{J_t}) +M_{I_s} >0  
\end{array}
\right.
\end{equation}
By condition (ii)  there does not exist a set of $ 2t + 2s +1 $ nonnegative constants, $c_1,....., c_{2t + 2s+1}$, with at least one of them positive, such that the linear combination of the first members of the
inequalities of (\ref{sys})  with coefficients $c_1,....., c_{2t + 2s}$
 plus $c_{2t + 2s+1}$ is identically zero.
So, by Carver's Theorem, the system (\ref{sys}) is solvable.
Let $(\overline{D}_{J_1},   ........., \overline{ D}_{J_t})$ be a solution.
By the fatness of $S$ and by  Theorem \ref{Bandelt-Steel}, for the dissimilarity vector with entries $$\overline{D}_{J_1},   ........., \overline{  D}_{J_t},
 \overline{  D}_{I_1} := f_{I_1} ( \overline{  D}_{J_1},........, \overline{   D}_{J_t}),.....,  \overline{  D}_{I_s} := f_{I_s} ( \overline{  D}_{J_1},........, \overline{   D}_{J_t}),$$
 there exists a weighted tree ${\cal T}=(T,w)$ with $L(T) = [ n]$ and such that 
$D_I ({\cal T}) =  \overline{  D}_I$ for any $I  \in {\{1,...,n\} \choose 2}$, so 
$D_I ({\cal T}) \in (m_I, M_I) $ for any $I  \in {\{1,...,n\} \choose 2}$.
\end{proof}

\begin{rem} Observe that the same technique can  be useful to study the analogous problem for some kind of tree. For instance we can prove 
easily in an analogous way that, given two families of real  numbers,
 $\{m_I\}_{I  \in {\{1,...,n\} \choose 2}}$ and
$\{M_I\}_{I  \in {\{1,...,n\} \choose 2}}$  with $m_I < M_I$ for any $I$,
there exists a weighted star ${\cal T}=(T,w)$
with $L(T) = [ n]$ and such that 
$D_I ({\cal T}) \in (m_I, M_I) $ for any $I  \in {\{1,...,n\} \choose 2}$ if
and only if 
 $$ m_{\sigma_1}+.....+m_{ \sigma_r} < M_{\tau_1} +..... + M_{\tau_r}$$ 
for any $(\sigma_1,...., \sigma_r) $ and $(\tau_1,...., \tau_r)$ partitions of the same subset of $[n]$ into $2$-sets.
\end{rem}

Considering $2$-weights in closed intervals,
 we get the following theorem.

\begin{thm} \label{treesnost}
Let $\{m_I\}_{I  \in {\{1,...,n\} \choose 2}}$ and
$\{M_I\}_{I  \in {\{1,...,n\} \choose 2}}$ be
two families of  real numbers  with $m_I \leq M_I$ for any $I$.  
There exists a weighted tree ${\cal T}=(T,w)$
with $L(T) = [ n]$ and such that 
$D_I ({\cal T}) \in [m_I, M_I] $ for any $I  \in {\{1,...,n\} \choose 2}$ if
and only if 
there exists a system  $S$ of splits of the quartets of $[n]$ such that 

(i) $S$ is fat, transitive and saturated,

(ii) $$ m_{\sigma_1}+.....+m_{ \sigma_r} \leq M_{\tau_1} +..... + M_{\tau_r}$$ 
for any $(\sigma_1,...., \sigma_r) $ and $(\tau_1,...., \tau_r)$ partitions of the same subset of $[n]$ into $2$-sets such that $(\sigma_1,...., \sigma_r) $ can be obtained from  $(\tau_1,...., \tau_r)$ with transformations on the $2$-sets
of the following kind:
$$ (i,k) ,( j,l) \mapsto (i,j) ,(k,l) $$ for any $ (j,k \;|\;  i,l) \in S$. 
\end{thm}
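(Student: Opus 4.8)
The plan is to reduce Theorem \ref{treesnost} (closed intervals) to Theorem \ref{treesdisstr} (open intervals) via an $\varepsilon$-perturbation argument, rather than reproving everything from scratch. The direction $\Rightarrow$ is essentially identical to the open case: given a weighted tree ${\cal T}$ with $D_I({\cal T}) \in [m_I,M_I]$, define $S$ by the neighbour relation on the restricted quartet trees $T|_{a,b,c,d}$; then $S$ is fat, transitive and saturated, and for any pair of partitions $(\sigma_1,\dots,\sigma_r)$, $(\tau_1,\dots,\tau_r)$ of the same subset related by the allowed moves, one has $\sum m_{\sigma_i} \leq \sum D_{\sigma_i}({\cal T}) = \sum D_{\tau_i}({\cal T}) \leq \sum M_{\tau_i}$, the middle equality holding because each allowed move $(i,k),(j,l)\mapsto(i,j),(k,l)$ with $(j,k\,|\,i,l)\in S$ preserves the sum of the $2$-weights (this is exactly the four-point condition for the split realized in the tree). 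So (i) and (ii) hold.

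For the direction $\Leftarrow$, suppose $S$ satisfies (i) and (ii) with nonstrict inequalities. For each $\varepsilon>0$ I would consider the families $m_I^{\varepsilon} := m_I - \varepsilon$ and $M_I^{\varepsilon} := M_I + \varepsilon$ (one could even keep $M_I$ fixed and only lower the $m_I$, or split the slack; lowering both is cleanest). Then $m_I^{\varepsilon} < M_I^{\varepsilon}$ strictly, and the same $S$ still works: condition (i) is unchanged, and for condition (ii) in the strict form we need $\sum m_{\sigma_i}^{\varepsilon} < \sum M_{\tau_i}^{\varepsilon}$, i.e. $\sum m_{\sigma_i} - r\varepsilon < \sum M_{\tau_i} + r\varepsilon$, which follows from the nonstrict inequality $\sum m_{\sigma_i} \leq \sum M_{\tau_i}$ since $2r\varepsilon > 0$. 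Hence by Theorem \ref{treesdisstr} there is a weighted tree ${\cal T}_\varepsilon$ with $L(T_\varepsilon)=[n]$ and $D_I({\cal T}_\varepsilon) \in (m_I - \varepsilon, M_I + \varepsilon)$ for all $I$.

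Now I want to pass to the limit $\varepsilon \to 0$. The subtlety is that the tree topology can vary with $\varepsilon$, and a naive metric-space limit argument is awkward. The clean way is to go back inside the proof of Theorem \ref{treesdisstr} and use Lemma \ref{stnost} directly. Fix the set $S$ (which does not depend on $\varepsilon$); by Lemma \ref{sistug} the linear system $D_{a,c} - D_{b,c} = D_{a,d} - D_{b,d}$ for $(a,b\,|\,c,d)\in S$ has nonzero solutions, so we may solve $D_{I_i} = f_{I_i}(D_{J_1},\dots,D_{J_t})$ for $i=1,\dots,s$, with the $f_{I_i}$ linear; this presentation, too, is independent of $\varepsilon$. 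For each $\varepsilon>0$ the system (\ref{sys}) with $m_I, M_I$ replaced by $m_I - \varepsilon, M_I + \varepsilon$ is solvable — equivalently, the system whose inequalities are $D_{J_k} - m_{J_k} > -\varepsilon$, $f_{I_i}(\cdot) - m_{I_i} > -\varepsilon$, $-D_{J_k} + M_{J_k} > -\varepsilon$, $-f_{I_i}(\cdot) + M_{I_i} > -\varepsilon$ is solvable for every $\varepsilon>0$ (here all the $z$'s in Lemma \ref{stnost} equal $1$, and there are no $\geq 0$ rows, so $s=r$ in the notation of that lemma). Therefore, by Lemma \ref{stnost}, the system with all these inequalities replaced by their nonstrict ($\geq 0$) versions and $\varepsilon = 0$ is solvable: there exist $\overline{D}_{J_1},\dots,\overline{D}_{J_t}$ with $\overline{D}_{J_k} \in [m_{J_k}, M_{J_k}]$ and, setting $\overline{D}_{I_i} := f_{I_i}(\overline{D}_{J_1},\dots,\overline{D}_{J_t})$, also $\overline{D}_{I_i} \in [m_{I_i}, M_{I_i}]$. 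These $\overline{D}$ satisfy the linear relations imposed by $S$, so by the fatness of $S$ together with Theorem \ref{Bandelt-Steel} (Bandelt-Steel) there is a weighted tree ${\cal T}=(T,w)$ with $L(T)=[n]$ and $D_I({\cal T}) = \overline{D}_I$ for all $I$, giving $D_I({\cal T}) \in [m_I, M_I]$ as required.

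The main obstacle, and the point that needs care, is the bookkeeping in the application of Lemma \ref{stnost}: one must check that condition (ii) of the hypothesis is exactly what makes (\ref{sys})-with-$\varepsilon$ solvable via Carver's Theorem (the linear combination that Carver forbids corresponds precisely to choosing partitions $(\sigma_i)$, $(\tau_i)$ related by the $S$-moves, because the relations $D_{I_i} = f_{I_i}(\cdots)$ encode exactly the transpositions $(i,k),(j,l)\mapsto(i,j),(k,l)$ for $(j,k\,|\,i,l)\in S$), and that the perturbation $m_I \mapsto m_I - \varepsilon$, $M_I \mapsto M_I + \varepsilon$ shifts each inequality's constant term by a bounded multiple of $\varepsilon$ so that Lemma \ref{stnost} applies with all $z_i = 1$. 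A secondary point is verifying that fatness, transitivity and saturation of $S$ — the only facts used about $S$ besides (ii) — are genuinely independent of $\varepsilon$, which is immediate since $S$ is carried over unchanged. I would also remark that one could alternatively run the compactness argument on trees directly (finitely many topologies on $[n]$, bounded edge weights, pass to a convergent subsequence), but the Lemma \ref{stnost} route avoids having to argue that limits of tree metrics are tree metrics with the correct leaf set.
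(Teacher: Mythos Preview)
Your proposal is correct and, once you settle on the ``clean way,'' it is essentially the paper's own argument: set up the $\varepsilon$-perturbed linear system, use condition (ii) with Carver's Theorem to get solvability for every $\varepsilon>0$, apply Lemma \ref{stnost} to obtain a solution of the nonstrict system, and finish with Bandelt--Steel via the fatness of $S$. The only difference is expository: the paper goes straight to the perturbed system (\ref{syseps}) rather than first invoking Theorem \ref{treesdisstr} to produce trees ${\cal T}_\varepsilon$ and then retreating inside its proof.
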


\begin{proof}
The proof of the implication $\Rightarrow $ is completely analogous to the
proof of the same implication of Theorem  \ref{treesdisstr}.
Let us prove the other implication. 
By Lemma \ref{sistug}, the linear system given by the
equations $ D_{a,c} - D_{b,c} = D_{a,d} - D_{b,d}$ 
for any $ (a, b\; |\; c ,d ) \in S$  has nonzero solutions. So we can write some unknowns, 
$D_{I_1},....., D_{I_r}$,
 in function of some others $D_{J_1},   .........,  D_{J_t}$ for some $t \geq 1$:
let $$D_{I_i} = f_{I_i} (D_{J_1},........, D_{J_t})$$
for any $i=1,....,r$.
    Consider the system of inequalities 
   \begin{equation} \label{syseps}
   \left\{ \begin{array}{l}
    D_{J_1} - m_{J_1} +\epsilon >0 \\
      ........\\ 
        ........\\ 
       D_{J_t} - m_{J_t} + \epsilon >0  \\
    f_{I_1} (D_{J_1},........, D_{J_t}) -m_{I_1} + \epsilon>0\\
      ........\\ 
        ........\\ 
          f_{I_s} (D_{J_1},........, D_{J_t}) -m_{I_s} +\epsilon >0 \\
            -D_{J_1} +M_{J_1} + \epsilon  >0\\
      ........\\ 
        ........\\  
       -D_{J_t} +M_{J_t} +\epsilon >0  \\
    -f_{I_1} (D_{J_1},........, D_{J_t}) +M_{I_1} + \epsilon >0\\
      ........\\ 
        ........\\  
         - f_{I_s} (D_{J_1},........, D_{J_t}) +M_{I_s} + \epsilon >0  
\end{array}
\right.
\end{equation}

By condition (ii), we have that, for any
 $ \epsilon >0$,
$$ m_{\sigma_1}+.....+m_{ \sigma_r} - (2r)\epsilon < M_{\tau_1} +..... + M_{\tau_r}$$ for any $r \in \N-\{0\}$,
for any $(\sigma_1,...., \sigma_r) $ and $(\tau_1,...., \tau_r)$ partitions of the same $2r$-subset of $[n]$ into $2$-sets such that $(\sigma_1,...., \sigma_r) $ can be obtained from  $(\tau_1,...., \tau_r)$ with transformations on the $2$-sets
of the kind $ (i,k \;|\; j,l) \mapsto (i,j \;| k,l) $ for any $ (j,k \; |\; i,l) \in S$. 
So  there does not exist a set of $ 2t + 2s +1 $ nonnegative constants, $c_1,....., c_{2t + 2s+1}$ with at least   one of them positive, such that the linear combination of the first members of the
inequalities of (\ref{syseps})  with coefficients $c_1,....., c_{2t + 2s}$
 plus $c_{2t + 2s+1}$ is identically zero.
So, by Carver's theorem, the system (\ref{syseps}) is solvable  for any
 $ \epsilon >0$.
Hence, by Lemma \ref{stnost}, the system we get from (\ref{syseps})
by replacing $ \geq $ with $>$  and $\epsilon $ with $0$ is solvable.
Let $(\overline{D}_{J_1},   .........,\overline{  D}_{J_t})$ be a solution.

By the fatness of $S$ and by  Theorem \ref{Bandelt-Steel}, for the dissimilarity vector with entries 
$$\overline{D}_{J_1},   ........., \overline{  D}_{J_t},
 \overline{  D}_{I_1} := f_{I_1} ( \overline{  D}_{J_1},........, \overline{   D}_{J_t}),.....,  \overline{  D}_{I_s} := f_{I_s} ( \overline{  D}_{J_1},........, \overline{   D}_{J_t}),$$
 there exists a weighted tree ${\cal T}=(T,w)$ with $L(T) = [ n]$ and such that 
$D_I ({\cal T}) = \overline{D}_I$ for any $I  \in {\{1,...,n\} \choose 2}$, so 
$D_I ({\cal T}) \in [m_I, M_I] $ for any $I  \in {\{1,...,n\} \choose 2}$.

\end{proof}

By using Theorem \ref{Bune}, we get  a  theorem, analogous to the previous ones, for positive-weighted trees:

\begin{thm}
Let $\{m_I\}_{I  \in {\{1,...,n\} \choose 2}}$ and
$\{M_I\}_{I  \in {\{1,...,n\} \choose 2}}$ be
two families of positive real numbers  with $m_I < M_I$ for any $I$.  
There exists a positive-weighted tree ${\cal T}=(T,w)$
with $L(T) = [ n]$ and such that 
$D_I ({\cal T}) \in (m_I, M_I) $ for any $I  \in {\{1,...,n\} \choose 2}$ if
and only if 
there exists a system  $S$ of splits of the quartets of $[n]$ such that 
the  condition (i)  of Theorem \ref{treesdisstr} and the following condition hold:

(ii) $$ m_{\sigma_1}+.....+m_{ \sigma_r} < M_{\tau_1} +..... + M_{\tau_s}$$ 
for any $(\sigma_1,...., \sigma_r) $ and $(\tau_1,...., \tau_s)$ partitions of  subsets of $[n]$ into $2$-sets such that $(\tau_1,...., \tau_s)$  can be obtained from  $(\sigma_1,...., \sigma_r) $
 with transformations on the $2$-sets of the following kind:

$$ (i,k) ,( j,l) \mapsto (i,j) ,( k,l) $$ 
for any $ (j,k| i,l) \in S$,
$$ (i,k) ,( j,l) \mapsto (i,j) ,( k,l) $$ 
for any $ (i,k| j,l) \in S$ such that $(i,k| j,l)$ is the only split of $\{i,j,k,l\}$
 in $S$,
   $$ (a,b) \mapsto (a,c) , (c,b)$$ 
for any $a,b,c \in [n]$.
\end{thm}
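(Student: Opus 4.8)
The plan is to mirror the proof of Theorem \ref{treesnost} but to enlarge the set of allowed transformations so that we can exploit the extra freedom coming from \emph{positivity} of the weights rather than from the Bandelt--Steel four-point condition alone. The guiding principle is: for general weighted trees the constraints on a dissimilarity vector $\overline D$ produced by $S$ are exactly the linear equations $D_{a,c}-D_{b,c}=D_{a,d}-D_{b,d}$ for $(a,b\,|\,c,d)\in S$ (this is what Lemma \ref{sistug} and Theorem \ref{Bandelt-Steel} give), whereas for \emph{positive}-weighted trees one also needs, via Theorem \ref{Bune}, that the relevant sums $D_{i,k}+D_{j,l}$ be genuinely maximal (not merely tied) when $(i,k\,|\,j,l)$ is the unique split of $\{i,j,k,l\}$ in $S$, and that every $D_{a,b}$ be positive. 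Each of these positivity-type requirements translates into a strict linear inequality, and — crucially — the statement records exactly the three transformation types that generate, by composition, the full set of linear consequences of ``$\overline D$ must be realizable by a positive-weighted tree with leaf set $[n]$ and quartet-split system $S$.''

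**Key steps, in order.**
First, from condition (i) and Lemma \ref{sistug}, solve the linear system of equations attached to $S$, writing $D_{I_1},\dots,D_{I_s}$ as linear functions $f_{I_i}(D_{J_1},\dots,D_{J_t})$ of free unknowns $D_{J_1},\dots,D_{J_t}$, with $t\ge 1$. Second, set up the system of strict linear inequalities in the $D_{J}$ that encodes: (a) $D_I>m_I$ for all $I$; (b) $D_I<M_I$ for all $I$; (c) for every quartet $\{i,j,k,l\}$ whose unique $S$-split is $(i,k\,|\,j,l)$, the strict inequality $D_{i,k}+D_{j,l}>D_{i,j}+D_{k,l}$ (equivalently the version with the other pairing); (d) $D_{a,b}>0$ for all $\{a,b\}$ — though (a) with $m_I>0$ already forces this, so (d) is subsumed. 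Each $D_I$ here is shorthand for $D_I$ if $I\in\{J_1,\dots,J_t\}$ and for $f_I(D_J)$ otherwise. Third, apply Carver's theorem: the system is solvable iff no nonnegative, not-all-zero combination of the left-hand sides plus a nonnegative constant is identically zero. Fourth, and this is the heart of the matter, show that such a vanishing combination would contradict condition (ii): a nontrivial nonnegative relation among the left-hand sides, after using the equations that define the $f_{I_i}$, reduces to an identity of the form $\sum m_{\sigma_\alpha} - \sum M_{\tau_\beta} \ge 0$ where $(\tau_1,\dots,\tau_s)$ is obtained from $(\sigma_1,\dots,\sigma_r)$ by a sequence of the three listed moves — the first move corresponding to using a split equation, the second to using a ``strict maximality'' inequality of type (c), the third ($(a,b)\mapsto(a,c),(c,b)$) to using a triangle-type positivity inequality $D_{a,c}+D_{c,b}-D_{a,b}>0$ that holds in any positive-weighted tree. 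Conversely, $\Rightarrow$ is as in Theorem \ref{treesdisstr}: build $S$ from neighbour-relations in $T|_{a,b,c,d}$, observe that each listed move preserves the sum $\sum D_{\sigma}(\mathcal T)$ up to the appropriate strict inequality (the split moves preserve it exactly, the type-(c) moves strictly increase it by strict maximality in a positive-weighted tree, the refinement move $(a,b)\mapsto(a,c),(c,b)$ strictly increases it by positivity of the detoured edge), and conclude (ii).

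**Main obstacle.**
The delicate point is the fourth step — the bookkeeping that identifies an arbitrary nonnegative linear dependence among the inequality left-hand sides with a legal sequence of the three transformations, \emph{and} verifies that the correspondence is exact (no spurious relations, no missing ones). One must check that the equations $D_{I_i}=f_{I_i}(\dots)$ contribute precisely the ``$(i,k),(j,l)\mapsto(i,j),(k,l)$ for $(j,k\,|\,i,l)\in S$'' moves; that the strict inequalities of type (c) are needed in the ``unique split'' form stated (a non-unique split would be a genuine equation, already covered by the first move type); and that allowing the edge-subdivision move $(a,b)\mapsto(a,c),(c,b)$ for \emph{all} $a,b,c$ — not only for $c$ lying between $a$ and $b$ — does not over-constrain, since in the $\Rightarrow$ direction $D_{a,c}(\mathcal T)+D_{c,b}(\mathcal T)\ge D_{a,b}(\mathcal T)$ always, with strict inequality unless $c$ is on the path, and in the $\Leftarrow$ direction we only use these as additional valid inequalities in Carver's system. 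I would also need to double-check the mismatch between the index $r$ on the $m$-side and $s$ on the $M$-side in condition (ii): unlike the general-tree case, here the two partitions can be of \emph{different} subsets and \emph{different} sizes because the refinement move changes the number of blocks, so the combinatorial lemma underlying step four must be proved afresh rather than quoted from Theorem \ref{treesnost}.
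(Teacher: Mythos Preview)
Your approach is essentially the paper's own: solve the split equations via Lemma \ref{sistug}, enlarge the Carver system (\ref{sys}) by the strict four-point inequalities $D_{i,j}+D_{k,l}<D_{i,k}+D_{j,l}$ for quartets with a unique $S$-split and by the triangle inequalities $D_{a,c}+D_{c,b}-D_{a,b}>0$, use condition (ii) to rule out any nonnegative linear dependence, and finish with Theorem \ref{Bune}. The only slip is that in your step 2 you omit the triangle inequalities from the system (dismissing only the trivial positivity $D_{a,b}>0$) and then rely on them in step 4; they must be included explicitly---the paper in fact writes them as $D_{a,c}+D_{c,b}-D_{a,b}+\varepsilon>0$, in the spirit of Lemma \ref{stnost}.
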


The proof is very similar to the one of Theorem \ref{treesdisstr}; the only difference is that in the system (\ref{sys}) we have to consider also the inequalities induced (by replacing the $D_{I_i}$ with the $f_{I_i}$) by the 
 inequalities $$ D_{a,c} + D_{c,b}  - D_{a,b} + \varepsilon >0$$ 
for any distinct $a,b,c \in [n]$ and the inequalities 
$$ D_{a,b} + D_{c,d}  < D_{a,c} + D_{b,d}$$ for 
any quartet $\{a,b,c,d\}$ in $[n]$ such that there is only one of its 
splits, $ (a,b \; | \; c,d) $, in $S$.

{\small }

\bigskip

{\bf Address:}
Dipartimento di Matematica e Informatica ``U. Dini'', 
viale Morgagni 67/A,
50134  Firenze, Italia

{\bf
E-mail address:}
rubei@math.unifi.it

\end{document}